\theoremstyle{plain}
\newtheorem{thm}{Theorem}
\newtheorem{cor}[thm]{Corollary}
\newtheorem{lem}[thm]{Lemma}
\newtheorem{prop}[thm]{Proposition}
\newtheorem{conj}[thm]{Conjecture}
\theoremstyle{definition}
\newtheorem{ex}[thm]{Example}
\newtheorem{rem}[thm]{Remark}
\numberwithin{equation}{section}
\numberwithin{thm}{section}
\numberwithin{table}{section}
\newcommand{\abs}[1]{\left\vert#1\right\vert}
\newcommand{\set}[1]{\left\{#1\right\}}
\newcommand{\gen}[1]{\left\langle#1\right\rangle}
\DeclareMathOperator{\charpol}{charpol}
\DeclareMathOperator{\minpol}{minpol}
\DeclareMathOperator{\GL}{GL}
\DeclareMathOperator{\GSp}{GSp}
\DeclareMathOperator{\Gal}{Gal}
\DeclareMathOperator{\Frob}{Frob}
\DeclareMathOperator{\disc}{disc}
\DeclareMathOperator{\cond}{cond}
\newcommand{\A}{\mathcal A}
\newcommand{\Z}{\mathbb Z}
\newcommand{\Q}{\mathbb Q}
\newcommand{\F}{\mathbb F}
\newcommand{\I}{\rm I}
\newcommand{\Fp}{\mathbb F_p}
\newcommand{\Fq}{\mathbb F_q}
\newcommand{\Fell}{\mathbb F_\ell}
\newcommand{\cclass}{\mathcal{C}}
\newcommand{\cent}{\mathcal Z}
\renewcommand{\O}{\mathcal{O}}
\title[Local heuristics for abelian varieties over finite fields]{Local heuristics and an exact formula for abelian varieties of odd prime dimension over finite fields}
\author{Jonathan Gerhard}
\address{James Madison University, Harrisonburg, VA 22807}
\email{gerha2jm@dukes.jmu.edu}
\author{Cassandra Williams}
\address{James Madison University, Harrisonburg, VA 22807}
\email{willi5cl@jmu.edu}
\urladdr{\url{http://educ.jmu.edu/~willi5cl}}
\keywords{abelian varieties, finite fields, matrix groups}
\subjclass[2010]{14K02}
\begin{document}

\begin{abstract}
Consider a $q$-Weil polynomial $f$ of degree $2g$. Using an equidistribution assumption that is too strong to be true, we define and compute a product of local relative densities of matrices in $\GSp_{2g}(\Fell)$ with characteristic polynomial $f\mod\ell$ when $g$ is an odd prime.  This infinite product is closely related to a ratio of class numbers. When $g=3$ we conjecture that the product gives the size of an isogeny class of principally polarized abelian threefolds.
%
\end{abstract}

\maketitle
\setcounter{tocdepth}{1}
\tableofcontents

\section{Introduction}
This paper is a direct generalization of the work of Achter-Williams~\cite{achterwilliams2015} to abelian varieties of odd prime dimension, and is guided in philosophy by both Gekeler \cite{gekeler03} and Katz \cite{katz-lt}. We begin by considering abelian varieties over a finite field $\Fq$, where $q$ is a power of a prime. To each such variety $X$, we can associate a characteristic polynomial of Frobenius $f_X(T) \in \Z[T]$. A theorem of Tate~\cite{tate66} tells us that two varieties are isogenous if and only if their characteristic polynomials are equal.

Let $\A_g(k)$ denote the moduli space of principally polarized abelian varieties of dimension $g$ over a field $k$, where each variety is weighted by the size of its automorphism group.  Also define $\A_g(\Fq; f)$ to be those members of $\A_g(\Fq)$ with characteristic polynomial $f(T)$.  Then $\A_g(\Fq; f)$ denotes the set of isomorphism classes of principally polarized abelian varieties of dimension $g$ over $\Fq$ with characteristic polynomial of Frobenius $f$, weighted inversely by the size of the automorphism group, and computing $\#\A_g(\Fq;f)$ gives the number of (isomorphism classes of) abelian varieties over $\Fq$ in a particular isogeny class.

The Frobenius endomorphism gives an automorphism of the Tate module $T_{\ell}X$ (for $\ell$ not dividing $q$), so it has a representation as an element of the matrix group $\GSp_{2g}(\Fell)$. For each $\ell$, we define a term $\nu_{\ell}(f)$ measuring the relative frequency of $f \mod \ell$ as the characteristic polynomial for an element of $\GSp_{2g}(\Fell)$, as well as an archimedean term $\nu_{\infty}(f)$. 

Since (as much as possible) Frobenius elements are equidistributed in $\GSp_{2g}(\Fell)$, it seems possible that the product of these local densities could at least estimate the value of $\#\A_g(\Fq;f)$.  This, of course, is a ridiculous tactic, as the $\bmod$ $\ell$ Frobenius elements are only equidistributed when $q\gg_g\ell$.  However, as in \cite{achterwilliams2015}, we again show that this local data does  apparently control isogeny class size.

Our main result is as follows.  Consider a particular class of $q$-Weil polynomials $f$ of degree $2g$ (see the next section for details).  Let $K$ be the splitting field of $f$ over $\Q$ and $K^+$ be its maximal totally real subfield, with class numbers $h_K$ and $h_{K^+}$ respectively.  
A theorem of Everett Howe in the preprint \cite{howepreprint}, together with the conditions stated in the next section, implies that 
$$\#\A_{g}(\Fq; f)=\frac{h_K}{h_{K^+}}$$
(see Theorem~\ref{Thm:Howe} and Corollary~\ref{CorToHowe}).  Then an immediate corollary of this theorem and our work is that 
$$\nu_{\infty}(f)\prod_{\ell}\nu_{\ell}(f) = \#\A_{g}(\Fq; f)$$
when $g = 3$ (see Corollary \ref{Cor:g3UseHowe}). (For $g>3$ an odd prime, there is only a minimal obstruction to this corollary which is explained at the end of section \ref{SectionCentralizerOrders} and in Remark \ref{RemarkNonSSWillMatch}.)

\section{Abelian varieties and Weil polynomials}\label{Sec:WeilPolys}

Let $X/\F_q$ be an abelian variety of dimension $g$ over a finite field of $q=p^a$ elements, and let $f(T)\in\Z[T]$ be the characteristic polynomial of its Frobenius endomorphism.  Then $f(T)$ is a $q$-Weil polynomial, a polynomial of degree $2g$ with complex roots $\alpha_1,\dots,\alpha_{2g}$ with $\abs{\alpha_j}=\sqrt{q}$ for every $j$, where the ordering can be chosen so that $\alpha_j\alpha_{g+j}=q$ for $1\leq j \leq g$.

Each $q$-Weil polynomial $f(T)$ corresponds to a (possibly empty) isogeny class $\mathcal{I}_f$ of abelian varieties of dimension $g$ over $\F_q$.  Following \cite{achterwilliams2015}, we will assume:

\begin{enumerate}\def\theenumi{W.\arabic{enumi}}
\item ({\em ordinary}) \label{enord} the middle coefficient of $f$ is relatively prime to $p$;

\item ({\em principally polarizable}) \label{enpp} there exists a principally polarized abelian variety of dimension $g$ with characteristic polynomial $f$;

\item ({\em cyclic}) \label{encyclic}  the polynomial $f(T)$ is irreducible over $\Q$, and $K_f := \Q[T]/f(T)$ is Galois, cyclic, and unramified at $p$;

\item ({\em maximal}) \label{enmax} for $\pi_f$ a (complex) root of $f(T)$, with complex conjugate $\bar\pi_f$, then $\O_f := \Z[\pi_f,\bar\pi_f]$, {\em a priori} an order in $K_f$, is actually the maximal order $\O_{K_f}$.
\end{enumerate}

Assumptions \eqref{enord}, \eqref{enpp}, and \eqref{enmax} are identical to those in \cite{achterwilliams2015}.  

The condition \eqref{encyclic} is similar; we want to assume $K_f$ is abelian and Galois.  In \cite[(W.3)]{achterwilliams2015}, the authors only assumed that $K_f$ is Galois, but as $K_f$ was a number field of degree 4 this also guaranteed it to be abelian.  Many of the results proven in the present work require only that $K_f$ is abelian and Galois; however, we will assume that $g$ is an odd prime in many of our major results, and thus $K_f$ will be cyclic (with $\Gal(K_f/\Q)\cong \Z/2g\Z$).



\begin{rem}
It should be noted that \eqref{encyclic} is in fact a serious restriction, as number fields $K_f$ with a cyclic Galois group are quite rare among all number fields of degree $2g$.  Our method does work in broader contexts; for example, in \cite{rauchthesis} the author proves analogous results to those in \cite{achterwilliams2015} and our own for degree 4 fields with a nonabelian Galois group.  It seems likely that our methods readily generalize to any abelian Galois extension $K_f$, at the cost of more elaborate and extensive computations as the factorization of $g$ becomes more complex.

Thus, in this work, we will restrict to cyclic Galois groups in the hope of demonstrating our method and heuristic in the simplest generalized situation, rather than distracting the reader with details that provide no new insight into the problem.
\end{rem}

Note that $K_f$ is a CM field, and as such it comes equipped with an intrinsic complex conjugation $\iota\in\Gal(K_f/\Q)$.  Also, the isomorphism class of $\O_f$ (as an abstract order) is independent of the choice of $\pi_f$.

\begin{ex}
The polynomial $f(T)=T^6 + 10 T^5 + 48 T^4 + 151 T^3 + 336 T^2 + 490 T + 343$ is a 7-Weil polynomial that meets all of the assumptions \eqref{enord}-\eqref{enmax} when $g=3$.
\end{ex}

The Weil polynomial $f(T)$ factors as $f(T)=\prod_{j = 1}^{g} (T - \sqrt{q}e^{i\theta_j})(T - \sqrt{q}e^{-i\theta_j})$; then under our assumptions the polynomial $f^+(T)= \prod_{j = 1}^{g} (T - 2\sqrt{q}\cos(\theta_j))$ is the minimal polynomial of $\pi_f+\bar\pi_f$ and $K^+_f=\Q[T]/f^+(T)$ is the maximal totally real subfield of $K_f$.    Note that $\Z[\pi_f]\cong \Z[T]/f(T) \subset \O_f$ and define the conductor of $f$, $\cond(f)$ as the index $[\O_f:\Z[\pi_f]]$. We will denote the discriminants of the polynomials $f$ and $f^+$ as $\disc(f)$ and $\disc(f^+)$, respectively,  while $\Delta_\O$ will represent the discriminant of an order $\O$.  Note that $\Delta_{\Z[\pi_f]}=\disc(f)$ and $\Delta_{\O_{K^+_f}}=\disc(f^+)$.

In the following technical lemma, we give explicit forms for $\disc(f)$ and $\disc(f^+)$ for any positive integer $g$. 
\begin{lem}\label{discs} Let $f$ be a $q$-Weil polynomial of degree $2g$ with $g \geq 1$. 
Then
\begin{enumerate}
\item $\begin{displaystyle}\disc(f) = (-1)^g2^{2g^2}q^{2g^2 - g}\left(\prod_{j = 1}^g \sin^2(\theta_j)\right)\left(\prod_{1 \leq k < t \leq g} (\cos(\theta_k) - \cos(\theta_t))^2\right)^2 \end{displaystyle}$
\end{enumerate}
and
\begin{enumerate}
\setcounter{enumi}{1}
\item  $\begin{displaystyle} \disc(f^+) = 2^{g(g-1)}q^{\frac{g(g-1)}{2}}\prod_{1 \leq k < t \leq g} (\cos(\theta_k) - \cos(\theta_t))^2 \end{displaystyle}.$
\end{enumerate}
\end{lem}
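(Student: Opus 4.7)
The plan is to write each discriminant directly from the product formula $\disc(p) = \prod_{i<j}(r_i-r_j)^2$ over the roots, and then carry out the necessary trigonometric simplification. Part (2) is essentially immediate, so I would dispose of it first and then spend the effort on part (1).

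For (2), the roots of $f^+$ are $\gamma_j := 2\sqrt{q}\cos(\theta_j)$ for $j=1,\ldots,g$. Pulling the constant $2\sqrt{q}$ out of each difference gives
\[
\disc(f^+) = \prod_{1\le k<t\le g}\bigl(2\sqrt{q}\cos\theta_k - 2\sqrt{q}\cos\theta_t\bigr)^2
= (2\sqrt{q})^{2\binom{g}{2}} \prod_{1\le k<t\le g}(\cos\theta_k - \cos\theta_t)^2,
\]
which is the stated expression.

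For (1), I would label the $2g$ roots of $f$ as $\alpha_j=\sqrt{q}e^{i\theta_j}$ and $\beta_j=\sqrt{q}e^{-i\theta_j}$ and partition the unordered pairs of distinct roots into four types: (i) the $g$ conjugate pairs $\{\alpha_j,\beta_j\}$; (ii) the $\binom{g}{2}$ pairs $\{\alpha_k,\alpha_t\}$; (iii) the $\binom{g}{2}$ pairs $\{\beta_k,\beta_t\}$; and (iv) the $g(g-1)$ mixed pairs $\{\alpha_k,\beta_t\}$ with $k\ne t$. A direct count confirms $g+2\binom{g}{2}+g(g-1)=\binom{2g}{2}$. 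Type (i) contributes $\prod_j(\alpha_j-\beta_j)^2 = \prod_j(2i\sqrt{q}\sin\theta_j)^2=(-4q)^g\prod_j\sin^2\theta_j$, which already accounts for the sign $(-1)^g$ and for a factor of $2^{2g}q^g$.

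The real work is combining types (ii)--(iv). For each unordered index pair $\{k,t\}$ with $k<t$, I would compute
\[
(\alpha_k-\alpha_t)(\beta_k-\beta_t) = 2q\bigl(1-\cos(\theta_k-\theta_t)\bigr),
\qquad
(\alpha_k-\beta_t)(\alpha_t-\beta_k) = -2q\bigl(1-\cos(\theta_k+\theta_t)\bigr),
\]
using $e^{i\phi}+e^{-i\phi}=2\cos\phi$. Squaring and multiplying, the contribution of the three non-conjugate types at the pair $\{k,t\}$ is $16q^4\bigl[(1-\cos(\theta_k-\theta_t))(1-\cos(\theta_k+\theta_t))\bigr]^2$. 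The crux of the argument, and the step I expect to be fiddliest to get right cleanly, is the identity
\[
\bigl(1-\cos(\theta_k-\theta_t)\bigr)\bigl(1-\cos(\theta_k+\theta_t)\bigr) = (\cos\theta_k-\cos\theta_t)^2,
\]
which follows by expanding with the product-to-sum formula and the identity $\cos(A-B)+\cos(A+B)=2\cos A\cos B$, then collapsing via $\sin^2\theta_t=1-\cos^2\theta_t$. Once this identity is in hand, multiplying over all $k<t$ and combining with type (i) gives the constant $(-1)^g 2^{2g+2g(g-1)}q^{g+2g(g-1)} = (-1)^g 2^{2g^2}q^{2g^2-g}$ together with the advertised trigonometric factor, completing the proof.
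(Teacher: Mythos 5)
Your proof is correct: both trigonometric computations check out, the partition of the $\binom{2g}{2}$ root pairs is exhaustive, the exponent bookkeeping $2g+2g(g-1)=2g^2$ and $g+2g(g-1)=2g^2-g$ is right, and the key identity $\bigl(1-\cos(\theta_k-\theta_t)\bigr)\bigl(1-\cos(\theta_k+\theta_t)\bigr)=(\cos\theta_k-\cos\theta_t)^2$ is easily verified by expanding $(1-\cos A\cos B)^2-\sin^2A\sin^2B$. Your treatment of part (2) coincides with the paper's (a direct evaluation of $\prod_{k<t}(2\sqrt{q}\cos\theta_k-2\sqrt{q}\cos\theta_t)^2$). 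For part (1), however, the paper only asserts that the result ``proceeds by induction on $g$ using elementary methods'' and omits all details, whereas you give a closed, non-inductive derivation: grouping the Vandermonde factors by conjugate pairs, same-sign pairs, and mixed pairs, and collapsing each index pair $\{k,t\}$ to $16q^4(\cos\theta_k-\cos\theta_t)^4$ via the identity above. This buys a fully explicit and checkable argument in one pass, and it makes transparent exactly where the factor $\bigl(\prod_{k<t}(\cos\theta_k-\cos\theta_t)^2\bigr)^2$ comes from (it is the square of the $\disc(f^+)$ Vandermonde up to constants, which is the structural fact exploited in Lemma~\ref{lemconductor}); an induction on $g$ would have to re-derive essentially the same pairwise contributions when passing from $g-1$ to $g$, so your route is arguably the more natural one to record.
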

\begin{proof}
Recall that the roots of $f(T)$ are of the form $\sqrt{q} e^{\pm i\theta_j}$ for $1\leq j\leq g$.  Then the proof of part (1) of the lemma proceeds by induction on $g$ using elementary methods and is omitted here.  A direct computation of the discriminant of $f^+$, which has roots $\sqrt{q}e^{i\theta_j} + \sqrt{q} e^{-i\theta_j}=2\sqrt{q}\cos(\theta_j)$ for $1\leq j \leq g$, proves part (2).
\end{proof}

\begin{rem}
See \cite[Theorem 4.3]{howepreprint} for a similar computation relating the Frobenius angles $\theta_i$ to (in our notation) 
$$\sqrt{\Delta_{\O_{K_f}}/\Delta_{\O_{K_f^+}}}.$$
\end{rem}

The explicit forms of Lemma \ref{discs}  will be helpful in proving the following lemma, as well as for defining local factors in section \ref{Sec:LocalFactorsf}.

\begin{lem}\label{lemconductor}
The index of $\Z[\pi_f]$ in $\O_f$ is $q^{\frac{g(g-1)}{2}}$.
\end{lem}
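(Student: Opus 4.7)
The plan is to compute the index via the standard identity $\disc(f) = [\O_f : \Z[\pi_f]]^2 \cdot \Delta_{\O_{K_f}}$, together with the tower formula for discriminants applied to $\Q \subset K_f^+ \subset K_f$. All the raw data needed is already collected in Lemma~\ref{discs}.

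First I would establish the module structure $\O_f = \O_{K_f^+} \oplus \O_{K_f^+}\pi_f$. By assumption (W.4), $\O_f = \O_{K_f}$, and the equality $\Delta_{\O_{K_f^+}} = \disc(f^+)$ stated just before the lemma tells us $\O_{K_f^+} = \Z[\beta]$ where $\beta := \pi_f + \bar\pi_f$. Since $\bar\pi_f = \beta - \pi_f$, we have $\O_f = \Z[\pi_f, \bar\pi_f] = \O_{K_f^+}[\pi_f]$. The element $\pi_f$ satisfies the monic quadratic $h(x) = x^2 - \beta x + q \in \O_{K_f^+}[x]$, which gives $\O_f = \O_{K_f^+} + \O_{K_f^+}\pi_f$; directness of the sum follows from a one-line complex-conjugation argument (if $a + b\pi_f = 0$, then $a + b\bar\pi_f = 0$, so $b(\pi_f - \bar\pi_f) = 0$ and $\pi_f \neq \bar\pi_f$ since $K_f$ is CM).

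Next I would compute the relative discriminant ideal. Because $\O_{K_f} = \O_{K_f^+}[\pi_f]$ with minimal polynomial $h(x) = x^2 - \beta x + q$, we have $\Delta_{\O_{K_f}/\O_{K_f^+}} = (\disc h) = (\beta^2 - 4q)$. The Galois conjugates of $\beta$ are the $\beta_j = 2\sqrt{q}\cos\theta_j$, so $\beta_j^2 - 4q = -4q\sin^2\theta_j$ and
\[
\bigl|N_{K_f^+/\Q}(\beta^2 - 4q)\bigr| \;=\; 4^g q^g \prod_{j=1}^g \sin^2\theta_j.
\]
Feeding this into the tower formula $|\Delta_{\O_{K_f}}| = |N_{K_f^+/\Q}(\Delta_{\O_{K_f}/\O_{K_f^+}})| \cdot |\Delta_{\O_{K_f^+}}|^2$ and inserting Lemma~\ref{discs}(2) for $|\Delta_{\O_{K_f^+}}| = |\disc(f^+)|$, the exponents of $2$ add to $2g + 2g(g-1) = 2g^2$ and the exponents of $q$ add to $g + g(g-1) = g^2$, so
\[
|\Delta_{\O_{K_f}}| \;=\; 2^{2g^2} q^{g^2} \prod_{j=1}^g \sin^2\theta_j \,\cdot \prod_{1 \le k < t \le g}(\cos\theta_k - \cos\theta_t)^4.
\]

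Finally, dividing the expression for $|\disc(f)|$ in Lemma~\ref{discs}(1) by the quantity just computed, the trigonometric factors cancel and the power of $2$ cancels, leaving
\[
[\O_f : \Z[\pi_f]]^2 \;=\; \frac{|\disc(f)|}{|\Delta_{\O_{K_f}}|} \;=\; q^{2g^2 - g - g^2} \;=\; q^{g(g-1)},
\]
so $[\O_f : \Z[\pi_f]] = q^{g(g-1)/2}$ (the exponent is a nonnegative integer since $g(g-1)$ is even). The only point requiring real care is the first step, ensuring that $\Z[\beta]$ is already the full ring of integers of $K_f^+$ and that adjoining $\pi_f$ then gives all of $\O_{K_f}$; everything else is bookkeeping with Lemma~\ref{discs}.
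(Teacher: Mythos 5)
Your proof is correct, and it reaches the same final computation as the paper but by a more self-contained route for the key discriminant identity. The paper's proof simply cites Howe (\cite{howe95}) for the formula $\Delta_{\O_f} = (-1)^g\disc(f^+)^2\,N_{K_f/\Q}(\pi_f-\bar\pi_f)$, computes that norm as $4^gq^g\prod_j\sin^2(\theta_j)$, and then feeds Lemma~\ref{discs} into $\disc(f)=\cond(f)^2\Delta_{\O_f}$ exactly as you do at the end. What you do differently is \emph{prove} that identity rather than import it: you exhibit the module decomposition $\O_f=\O_{K_f^+}\oplus\O_{K_f^+}\pi_f$, read off the relative discriminant $(\beta^2-4q)$ from the monogenic presentation $h(x)=x^2-\beta x+q$, and apply the tower formula for discriminants; since $(\pi_f-\bar\pi_f)^2=\beta^2-4q$, your norm computation is the same quantity in disguise. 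What your approach buys is independence from the external preprint-era citation and a transparent reason why $\disc(f^+)$ enters squared; what it costs is two extra inputs the paper's version does not need, namely the tower formula itself and the fact that $\Z[\pi_f+\bar\pi_f]$ is the full ring of integers $\O_{K_f^+}$ (Howe's formula is stated for the orders $\Z[\pi_f,\bar\pi_f]$ and $\Z[\pi_f+\bar\pi_f]$ directly, so the paper's argument never invokes maximality of the real suborder). That maximality is asserted in the text before Lemma~\ref{lemconductor} but only proved afterwards, in Lemma~\ref{lemmaximalrealorder}; since that later proof does not depend on the present lemma there is no circularity, but you should flag the forward reference if you adopt this argument. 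All of your intermediate computations (the directness of the sum via complex conjugation, the exponent bookkeeping $2g+2g(g-1)=2g^2$ and $g+g(g-1)=g^2$, and the final cancellation leaving $q^{g(g-1)}$) check out.
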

\begin{proof}
From \cite{howe95}, we have 
$\Delta_{\O_f} = (-1)^g\disc(f^+)^2 \, N_{K_f/\Q}(\pi_f - \bar{\pi}_f)$
and $\disc(f) = \cond(f)^2 \, \Delta_{\O_f}.$ Then 
$$\cond(f)^2 = (-1)^g\frac{\disc(f)}{\disc(f^+)^2 N_{K_f/\Q}(\pi_f - \bar{\pi}_f)}.$$


Without loss of generality, choose $\pi_f = \sqrt{q}e^{i\theta_1}$. Then $\pi_f - \bar{\pi}_f = 2i\sqrt{q}\sin(\theta_1)$ and all of its Galois conjugates are of the form $\pm 2i\sqrt{q}\sin(\theta_j)$ for $j \in \{1,2, \dots, g\}$. Therefore, 
$$N_{K_f/\Q}(\pi_f - \bar{\pi}_f) = \prod_{j = 1}^g (2i\sqrt{q}\sin(\theta_j))(-2i\sqrt{q}\sin(\theta_j)) = 4^g q^{g} \prod_{j=1}^g \sin^2(\theta_j).$$
Applying $\disc(f)$ and $\disc(f^+)$ from Lemma~\ref{discs}, we find $\cond(f)^2 = q^{g(g-1)}$, and the lemma follows.
\end{proof}

\begin{cor}
\label{lemalmostmaximal}
If $\ell\not = p$, then $\O_{K_f}\otimes \Z_{(\ell)} \cong \Z_{(\ell)}[T]/f(T)$.
\end{cor}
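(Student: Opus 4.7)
The plan is to trace through the chain of inclusions
$$\Z[\pi_f] \;\subset\; \O_f \;\subset\; \O_{K_f}$$
and show that after tensoring with $\Z_{(\ell)}$ (for $\ell \neq p$) every inclusion becomes an isomorphism. The maximality hypothesis \eqref{enmax} immediately gives $\O_f = \O_{K_f}$, so the only real content is to argue $\Z[\pi_f] \otimes \Z_{(\ell)} \cong \O_f \otimes \Z_{(\ell)}$.

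First I would invoke Lemma~\ref{lemconductor}, which identifies $[\O_f : \Z[\pi_f]] = \cond(f) = q^{g(g-1)/2}$. The key observation is that this index is a power of $p$. Hence the finite abelian group $\O_f / \Z[\pi_f]$ is $p$-primary torsion. Localizing the short exact sequence
$$0 \longrightarrow \Z[\pi_f] \longrightarrow \O_f \longrightarrow \O_f / \Z[\pi_f] \longrightarrow 0$$
at the prime $(\ell)$ with $\ell \neq p$ makes $p$ a unit, so $(\O_f / \Z[\pi_f]) \otimes \Z_{(\ell)} = 0$ and $\Z[\pi_f] \otimes \Z_{(\ell)} \xrightarrow{\sim} \O_f \otimes \Z_{(\ell)}$.

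Finally I would combine: by hypothesis \eqref{enmax}, $\O_f = \O_{K_f}$, and by definition $\Z[\pi_f] = \Z[T]/f(T)$, so
$$\O_{K_f} \otimes \Z_{(\ell)} \;=\; \O_f \otimes \Z_{(\ell)} \;\cong\; \Z[\pi_f] \otimes \Z_{(\ell)} \;=\; \Z_{(\ell)}[T]/f(T),$$
which is the claim. There is no real obstacle here; the argument is essentially a one-line consequence of Lemma~\ref{lemconductor} and assumption \eqref{enmax}, and the only point to check carefully is that $q^{g(g-1)/2}$ is truly a unit in $\Z_{(\ell)}$, which follows from $\ell \nmid q$.
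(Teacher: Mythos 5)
Your proof is correct and is essentially the intended argument: the paper itself omits the proof (deferring to Achter--Williams), but the statement is positioned precisely as a consequence of Lemma~\ref{lemconductor} together with \eqref{enmax}, and your observation that the index $q^{g(g-1)/2}$ is a power of $p$, hence a unit in $\Z_{(\ell)}$ for $\ell\neq p$, is exactly the point. No gaps.
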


Corollary \ref{lemalmostmaximal} is proved, independent of the dimension of the abelian variety, in \cite{achterwilliams2015} and so its proof is omitted here.

Lastly, we prove that $\Z[T]/f^+(T)$ is the maximal order of $K_f^+$.

\begin{lem}
\label{lemmaximalrealorder}
The order $\Z[T]/f^+(T)$ is the maximal order $\O_{K_f^+}$.
\end{lem}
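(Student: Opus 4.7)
The plan is to prove the identity by intersection. Let $\alpha := \pi_f + \bar\pi_f$, so that $\Z[T]/f^+(T) \cong \Z[\alpha]$ as orders in $K_f^+$. I will show
\[
\Z[\alpha] \;=\; \O_f \cap K_f^+,
\]
and then invoke assumption \eqref{enmax} together with the standard fact that for a CM field $K_f/K_f^+$ one has $\O_{K_f}\cap K_f^+ = \O_{K_f^+}$. Combining these,
\[
\Z[T]/f^+(T) \;\cong\; \Z[\alpha] \;=\; \O_f \cap K_f^+ \;=\; \O_{K_f} \cap K_f^+ \;=\; \O_{K_f^+},
\]
which is the claim.

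For the key equality $\Z[\alpha] = \O_f \cap K_f^+$, the first step is the observation that $\pi_f$ is a root of the monic quadratic $x^2 - \alpha x + q \in \Z[\alpha][x]$ (since $\pi_f\bar\pi_f = q$). Because $\bar\pi_f = \alpha - \pi_f$, we get
\[
\O_f \;=\; \Z[\pi_f,\bar\pi_f] \;=\; \Z[\alpha][\pi_f] \;=\; \Z[\alpha] \oplus \Z[\alpha]\,\pi_f,
\]
a free $\Z[\alpha]$-module of rank $2$, where the directness of the sum follows from $[K_f:K_f^+] = 2$. Now suppose $a + b\pi_f \in K_f^+$ with $a,b \in \Z[\alpha]$. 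Applying complex conjugation fixes both $a$ and $b$ (they lie in $K_f^+$) and fixes the whole element (it is real), so $a + b\pi_f = a + b\bar\pi_f$, giving $b(\pi_f - \bar\pi_f) = 0$. Since $\pi_f \neq \bar\pi_f$ (the roots of $f$ are non-real), $b = 0$, so the element lies in $\Z[\alpha]$. The reverse containment $\Z[\alpha] \subseteq \O_f \cap K_f^+$ is immediate, finishing the identification.

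There is no serious obstacle here; the only place the hypotheses are used is the final substitution $\O_f = \O_{K_f}$ from assumption \eqref{enmax}. I note in passing that one could alternatively give a discriminant-based proof by comparing $\disc(f^+)$ from Lemma~\ref{discs} to $\Delta_{\O_{K_f^+}}$ computed from the relative discriminant formula $\Delta_{\O_{K_f}} = N_{K_f^+/\Q}(\mathfrak{d}_{K_f/K_f^+})\,\Delta_{\O_{K_f^+}}^{2}$ applied with the different $\mathfrak{d}_{K_f/K_f^+} = (\pi_f - \bar\pi_f)$, but the intersection argument above is shorter and avoids reproving the Howe discriminant identity already cited in the proof of Lemma~\ref{lemconductor}.
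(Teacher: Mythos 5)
Your proof is correct, and it shares the paper's overall skeleton---both arguments reduce the lemma to the equality $\Z[\pi_f+\bar\pi_f]=\O_f\cap K_f^+$ and then invoke assumption \eqref{enmax} together with the standard fact $\O_{K_f}\cap K_f^+=\O_{K_f^+}$---but the key containment $\O_f\cap K_f^+\subseteq\Z[\pi_f+\bar\pi_f]$ is established by a genuinely different argument. The paper writes a general element of $K_f^+$ as a $\Q$-linear combination of powers of $\pi_f+\bar\pi_f$, expands it in the explicit $\Z$-basis $\set{1,\pi_f,\dots,\pi_f^g,\bar\pi_f,\dots,\bar\pi_f^{g-1}}$ of $\O_f$, and back-substitutes from the top coefficient down to show all the rational coefficients are integers. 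You instead set $\alpha=\pi_f+\bar\pi_f$, observe that $\O_f=\Z[\alpha]\oplus\Z[\alpha]\pi_f$ is free of rank $2$ over $\Z[\alpha]$ because $\pi_f$ satisfies the monic quadratic $x^2-\alpha x+q$, and then kill the $\pi_f$-component of any totally real element by applying the conjugation generating $\Gal(K_f/K_f^+)$. Your route avoids the coefficient chase entirely, makes the role of the quadratic extension $K_f/K_f^+$ transparent, and visibly applies to any CM order of the form $\Z[\pi,\bar\pi]$ with $\pi\bar\pi\in\Z$; the paper's version is more hands-on but needs the explicit integral basis of $\O_f$. One small caution about your closing aside: identifying the relative different of $K_f/K_f^+$ with $(\pi_f-\bar\pi_f)$ presupposes that $\O_{K_f}=\O_{K_f^+}[\pi_f]$, which is essentially the statement being proved, so that alternative would be circular as sketched---but since you explicitly do not rely on it, your proof stands as written.
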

\begin{proof}
Condition \eqref{enmax} implies that $\O_f\cap K_f^+=\O_{K_f}\cap K_f^+=\O_{K_f^+}$.  Certainly $\Z[T]/f^+(T)=\Z[\pi_f+\bar\pi_f] \subseteq \O_f\cap K_f^+$.

Let $\alpha\in K_f^+=\Q[T]/f^+(T)=\Q(\pi_f+\bar\pi_f)$, which has dimension $g$ over $\Q$.  Then 
$$\alpha=a_0+a_1(\pi_f+\bar\pi_f) + a_2(\pi_f+\bar\pi_f)^2+\cdots+a_{g-1}(\pi_f+\bar\pi_f)^{g-1}$$
with all $a_i\in\Q$.  Suppose $\alpha$ is also an element of $\O_f$, so $\alpha\in\O_f\cap K^+_f$.  It is straightforward to show that the set 
$$\set{1, \pi_f, \pi_f^2,\ldots,\pi_f^g,\bar\pi_f,\bar\pi_f^2,\ldots,\bar\pi_f^{g-1}}$$
forms a basis for $\O_f=\Z[\pi_f,\bar\pi_f]$.  Recall that $\pi_f\bar\pi_f=q$; expand $\alpha$ and collect powers of $\pi_f$ and $\bar\pi_f$.  Notice that the coefficient of $\pi_f^{g-1}$ in $\alpha$ is exactly $a_{g-1}$, and so $a_{g-1}\in\Z$.   Using back substitution on the coefficients of powers of $\pi_f$ and $\bar\pi_f$, we find that $a_{g-2}\in\Z$, $a_{g-3}\in\Z$, and so on.  Therefore, all $a_i\in\Z$ and $\alpha\in\O_f\cap K_f^+$ is such that $\alpha\in\Z[\pi_f,\bar\pi_f]$ and $\O_f\cap K_f^+\subseteq \Z[\pi_f+\bar\pi_f].$  Therefore, $\Z[\pi_f+\bar\pi_f]=\O_{K_f^+}$.
%
%
\end{proof}

\section{Conjugacy classes in $\GSp_{2g}(\F_\ell)$}\label{Sec:cclasses}

\subsection{Symplectic groups and conjugacy}

The symplectic group $\GSp_{2g}(\Fell)$ is the subgroup of $\GL_{2g}(\Fell)$ preserving an antisymmetric bilinear form $J$ up to a scalar multiple. We choose  $$J = \begin{bmatrix}
0 & \I_g \\
-\I_g & 0
\end{bmatrix}$$ 
but note that different choices of $J$ produce isomorphic copies of $\GSp_{2g}(\Fell)$. 

Explicitly,  
$$\GSp_{2g}(\Fell) = \{ M \in \GL_{2g}(\Fell) \mid MJM^T = mJ \text{ for some $m \in \Fell^{\times}$}\}.$$ 
The value $m$ is called the \emph{multiplier} of $M$. All matrices in $\GSp_{2g}(\Fell)$ have the property that there exists a pairing (dictated by the choice of antisymmetric bilinear form) of its eigenvalues such that each pair has product $m$.

In \cite[Theorem~1.18]{shinoda80}, Shinoda parametrizes the set of conjugacy classes of $\GSp_{2g}(\Fell)$.  For our purposes, we do not need their parametrization in full generality, and will describe the relevant portions in our own notation. Let 
$$f(T) = T^d + c_{d-1}T^{d-1} + \dots + c_1T + c_0$$ 
be a polynomial in $\Fell[T]$.   Define the \emph{dual} of $f(T)$ with respect to the multiplier $m$ to be 
$$\bar{f}^m(T) = \frac{T^d}{c_0}f(mT^{-1}).$$ 
(We will occasionally omit the $m$ on the left hand side for notational convenience.) Then we have three types of polynomials:
\begin{enumerate}
\item \emph{Root polynomials} are polynomials of the form $f(T) = T^2 - m$ when $m$ is not a square, or either of $f(T) = T \pm \sqrt{m}$ when $m$ is a square.  (It is easy to check that all root polynomials satisfy $\bar{f}^m(T)=f(T).$)
\item \emph{$\alpha$ pairs} are pairs of polynomials $(f(T), \bar{f}^m(T))$ such that $ \bar{f}^m(T)\neq f(T)$.
\item \emph{$\beta$ polynomials} are polynomials $f(T)$ such that $\bar{f}^m(T)=f(T)$ and $f(T)$ is not a root polynomial.
\end{enumerate}


Note that any linear polynomial satisfying $\bar{f}^m(T)=f(T)$ is a root polynomial.  One consequence of these definitions is as follows.
\begin{lem}\label{NoOddBetas}
There are no irreducible $\beta$ polynomials of odd degree. That is, for all irreducible nonlinear polynomials $f(T)$ of odd degree and all multipliers $m\in\Fq^\times$, $$ \bar{f}^m(T)\neq f(T).$$
\end{lem}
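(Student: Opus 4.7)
The plan is to argue by contradiction using the involution on roots induced by the duality. A direct computation (expanding $f(T)=\prod_{i=1}^d(T-\alpha_i)$ and substituting $mT^{-1}$) shows that
$$\bar{f}^m(T)=\prod_{i=1}^d\left(T-\frac{m}{\alpha_i}\right),$$
so the equation $\bar{f}^m(T)=f(T)$ is equivalent to saying that the involution $\alpha\mapsto m/\alpha$ on $\overline{\Fell}^\times$ permutes the root multiset of $f$. Note that since $f$ is irreducible and nonlinear, $f(0)\neq 0$, so this involution is well-defined on every root.

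Suppose, for contradiction, that $f(T)\in\Fell[T]$ is irreducible, nonlinear (so of degree $d\geq 3$), of odd degree $d$, and satisfies $\bar{f}^m(T)=f(T)$. Because $\Fell$ is perfect, the irreducible polynomial $f$ is separable, so its roots in $\overline{\Fell}$ are distinct. The involution $\alpha\mapsto m/\alpha$ partitions these $d$ roots into orbits of size $1$ or $2$. Since $d$ is odd, at least one orbit has size $1$, so there exists a root $\alpha$ with $\alpha=m/\alpha$, i.e.\ $\alpha^2=m$.

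Such an $\alpha$ is then a root of the polynomial $T^2-m\in\Fell[T]$. Because $f$ is irreducible over $\Fell$, it is the minimal polynomial of $\alpha$, and hence $f(T)$ divides $T^2-m$. This forces $\deg f\leq 2$, contradicting $\deg f\geq 3$.

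There isn't really a hard step here; the only thing worth being careful about is the orbit-counting argument, which requires separability to guarantee that roots are distinct (otherwise a ``fixed point'' could occur by coincidence of multiplicities rather than by the equation $\alpha^2=m$), and the observation that a nonlinear irreducible polynomial has nonzero constant term so that $\alpha\mapsto m/\alpha$ makes sense on all its roots. Both are automatic in our setting.
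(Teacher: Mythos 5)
Your proof is correct, but it takes a different route from the paper's. The paper argues with coefficients: equating constant terms of $f$ and $\bar f^m$ forces $c_0^2=m^d$, which is impossible for non-square $m$ (since $d$ is odd), and for square $m$ it exhibits the rational root $-\sqrt{m}$ of $f$, contradicting irreducibility. You instead work with the roots: $\bar f^m=f$ means the involution $\alpha\mapsto m/\alpha$ permutes the (distinct, by separability) roots, odd degree forces a fixed point $\alpha$ with $\alpha^2=m$, and then $f=\minpol(\alpha)$ divides $T^2-m$, so $\deg f\le 2$. Your version has the advantage of treating square and non-square $m$ uniformly (the fixed root may lie in $\Fell$ or in $\F_{\ell^2}$; either way the degree bound kills it), of sidestepping the question of which of $\pm\sqrt{m}$ is actually the root (which in the paper's square case depends on the sign of $c_0$, with a small wrinkle in characteristic $2$), and of making visible the structural fact that the roots of a self-dual polynomial pair off under $\alpha\mapsto m/\alpha$. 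The paper's version is shorter and purely computational. Both are complete proofs.
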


\begin{proof}
Let $f(T) = T^d + c_{d-1}T^{d-1} + \dots + c_1T + c_0$ be an irreducible polynomial with $d\geq 3$ odd and suppose $\bar{f}^m(T)=f(T)$. This implies $c_0^2 = m^d$, but if $m$ is non-square then $m^d$ is non-square since $d$ is odd, which is a contradiction. If instead $m$ is a square, then $-\sqrt{m} \in \Fell$ is a root of $f(T)$, contradicting the fact that $f(T)$ is irreducible.
\end{proof}

The general theory of conjugacy classes in $\GL_n$ as well as the additional intricacies of conjugacy in $\GSp_n$ are given (briefly) in \cite[Section~3.2]{achterwilliams2015}, and the reader is encouraged to revisit this section if needed.  For our current purposes, recall that to each irreducible factor of the characteristic polynomial of a matrix, we associate a partition of its multiplicity; the characteristic polynomial together with its partition data determine conjugacy in $\GL_n$.  We also remind the reader that \emph{cyclic} matrices are those for which the characteristic and minimal polynomials coincide, and thus are those where all partitions are maximal (consist of only one part).  Then characteristic polynomials for conjugacy classes of cyclic matrices in $\GSp_{2g}(\Fell)$ are constructed as follows. 

\begin{thm}\label{ConjClassCharacterization} The following characteristic polynomials uniquely determine a conjugacy class of cyclic matrices in $\GSp_{2g}(\Fell)$, where, for each $m$, the first product is over all $\alpha$ pairs and the second product is over all $\beta$ polynomials.

\begin{enumerate}
\item For square $m \in \Fell^{\times}$,
 $$f(T) = (T - \sqrt{m}\,)^{e_{R_1}}(T + \sqrt{m}\,)^{e_{R_2}}\prod_{\alpha}(f_\alpha(T)\bar{f_\alpha}(T))^{e_\alpha} \prod_{\beta} (f_\beta(T))^{e_\beta}$$ 
 for a choice of $e_{R_1}, e_{R_2}, e_\alpha,$ and $e_\beta$ such that any odd parts in the partitions of $e_{R_1}$ and $e_{R_2}$ have even multiplicity and
 $$e_{R_1} + e_{R_2} + 2\sum_{\alpha}\deg(f_\alpha)e_\alpha + \sum_\beta \deg(f_\beta)e_\beta = 2g.$$

\item For non-square $m \in \Fell^{\times}$,
$$f(T) = (T^2 - m)^{e_{R}}\prod_{\alpha}(f_\alpha(T)\bar{f_\alpha}(T))^{e_\alpha} \prod_{\beta} (f_\beta(T))^{e_\beta}$$ 
for a choice of $e_R, e_\alpha,$ and $e_\beta$ such that any odd parts in the partition of $e_{R}$ have even multiplicity and
$$2e_R + 2\sum_{\alpha}\deg(f_\alpha)e_\alpha + \sum_\beta \deg(f_\beta)e_\beta = 2g.$$
\end{enumerate}
For each exponent $e_k$ defined above, the only allowable partition is $[e_k]$.
\end{thm}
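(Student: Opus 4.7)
The plan is to specialize Shinoda's general parametrization of $\GSp_{2g}(\F_\ell)$ conjugacy classes \cite[Theorem~1.18]{shinoda80} to the cyclic case. Shinoda's theorem encodes each conjugacy class of multiplier $m$ by a family of partitions, one attached to each root polynomial, each $\alpha$-pair, and each $\beta$-polynomial with respect to $m$, subject to two kinds of constraints: a total-degree condition (the sizes must sum to $2g$) and, for partitions attached to root polynomials, the requirement that any odd part occur with even multiplicity (reflecting the orthogonal/symplectic structure of the invariant subspace associated to a self-dual root). The partitions attached to $\alpha$-pairs and $\beta$-polynomials are unrestricted beyond the degree bookkeeping.

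Next, I would recall the general fact (used already in \cite[Section~3.2]{achterwilliams2015} for $\GL_n$) that a matrix is cyclic iff its characteristic and minimal polynomials coincide, iff for each irreducible factor of its characteristic polynomial there is a single Jordan block. Translated into the partition language of Shinoda, this is exactly the statement that every partition attached to a root polynomial, $\alpha$-pair, or $\beta$-polynomial is the single-part partition $[e_k]$. This yields the final sentence of the theorem without further work.

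With the shape of every partition pinned to $[e_k]$, the degree relations transcribe immediately. For square $m$ the two root polynomials $T\pm\sqrt{m}$ are linear, giving contributions $e_{R_1}$ and $e_{R_2}$; each $\alpha$-pair $(f_\alpha,\bar f_\alpha^m)$ consists of two distinct factors of degree $\deg(f_\alpha)$, contributing $2\deg(f_\alpha)e_\alpha$; and each $\beta$-polynomial contributes $\deg(f_\beta)e_\beta$ once. For non-square $m$ the sole root polynomial $T^2-m$ has degree $2$, producing the $2e_R$ summand. The partition constraint for the root polynomials carries over verbatim: in the cyclic case the partition $[e_{R_i}]$ has its unique part of multiplicity one, so the condition that odd parts have even multiplicity forces $e_{R_1},e_{R_2}$ (respectively $e_R$) to be even whenever nonzero.

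I expect the only real obstacle to be notational: Shinoda's original paper uses heavy notation and a slightly different normalization of the symplectic form and of duality, so care is needed to verify that the dual $\bar f^m(T)=T^d f(mT^{-1})/c_0$ defined in the paper matches Shinoda's duality attached to the antisymmetric form $J$ chosen here, and to confirm that the trichotomy root / $\alpha$-pair / $\beta$-polynomial coincides with Shinoda's enumeration of irreducible constituents. Once this dictionary is in place, everything else is bookkeeping.
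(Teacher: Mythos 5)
Your route is the same as the paper's: Theorem~\ref{ConjClassCharacterization} is stated there without a formal proof, precisely as a specialization of Shinoda's parametrization \cite[Theorem~1.18]{shinoda80} to cyclic matrices, and your bookkeeping (cyclic $\iff$ every partition is the one-part partition $[e_k]$, then transcribe the degree identities for linear roots, degree-$2$ root polynomial, $\alpha$-pairs counted twice, $\beta$-polynomials counted once) is exactly what the paper intends. Your observation that the parity condition on root-polynomial partitions, combined with cyclicity, forces $e_{R_1},e_{R_2}$ (resp.\ $e_R$) to be even whenever nonzero is correct and consistent with the tables that follow (e.g.\ the shape $[1]^{2g}$ has even exponent $2g$, and $(T^2-m)^g$ never appears as a root-polynomial factor).

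The one substantive omission is that Shinoda's parametrization attaches, in addition to the partitions, an equivalence class of nondegenerate symmetric bilinear forms to each root polynomial, of rank governed by the number of even parts in that partition. This extra invariant is exactly what threatens the word \emph{uniquely} in the statement: a characteristic polynomial with a nontrivial root-polynomial factor and a single even part (e.g.\ $(T-\sqrt{m})^{2g}$) could a priori index two conjugacy classes, labelled $\{+,-\}$. Your dictionary between the paper's trichotomy and Shinoda's data never mentions this component, so as written your argument establishes the existence and the degree/partition constraints but not the uniqueness claim in the square-$m$ case with $e_{R_i}\neq 0$. The paper handles this in the remark immediately following the theorem, by restricting attention to the relevant shapes for odd $g$ and asserting that each gives rise to only one class in $\GSp_{2g}(\Fell)$; a complete proof along your lines would need to add that verification (essentially, that in $\GSp$, as opposed to $\Sp$, scaling the form by the multiplier identifies the two discriminant classes for the shapes that occur).
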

\begin{rem}
Shinoda's parameterization (\cite[Theorem~1.18]{shinoda80}) also includes sets of (nondegenerate symmetric) bilinear forms with ranks relating to the number of parts of even sizes in the partitions of any root polynomials present in the factorization of the characteristic polynomial.  The number of equivalence classes of these bilinear forms detect when a characteristic polynomial (and associated set of partitions) gives rise to two conjugacy classes in $\GSp_{2g}(\F_\ell)$, indexed by $\set{+,-}$.   We omit this component of Shinoda's parameterization because, when $g$ is odd, all of the characteristic polynomials that we will declare as \emph{relevant} in the next section give rise to only one conjugacy class.  
\end{rem}

\subsection{Relevant conjugacy classes of $\GSp_{2g}(\F_\ell)$}\label{conjclasses}

For the remainder of section \ref{Sec:cclasses}, let $g$ be an odd prime.

We want to identify the possible shapes (factorization structures) of characteristic polynomials  which correspond to conjugacy classes of cyclic matrices in $\GSp_{2g}(\Fell)$ which respect the assumptions \eqref{enord}-\eqref{enmax}.  Therefore, a \emph{relevant} characteristic polynomial is one such that
\begin{itemize}
\item  the factorization is as in Theorem \ref{ConjClassCharacterization},
\item  the degrees of all irreducible factors are equal, and 
\item  the multiplicities of each irreducible factor are equal. 
\end{itemize}
The first condition forces our matrices to be cyclic elements of $\GSp_{2g}(\Fell)$, and the other two correspond with the requirement that $K_f$ be Galois from \eqref{encyclic}.

Let $[d]$ denote a monic irreducible degree $d$ polynomial in $\Fell[T]$, and assume that $[d]_i\neq[d]_j$ if $i\neq j$.  Then the following are the shapes  of all relevant characteristic polynomials.
\begin{center}
\def\arraystretch{1.3}
\begin{tabular}{ll}
$[1]_1\dots[1]_{2g}$ & $[1]^{2g}$\\
$[2]_1\dots[2]_g$ & $[1]_1^g[1]_2^g $\\
$[g]_1[g]_2$ & $[2]^g$\\
$[2g]$ & \\
\end{tabular}
\end{center}
We exclude the shape $[g]^2$ by Lemma~\ref{NoOddBetas}. Additionally, we exclude $[1]_1^2\dots[1]_g^2$; since there are an odd number of factors, it must be that  one of these linear factors is a root polynomial while the rest are $\alpha$ pairs. Then the Galois group of $K_f$ cannot act transitively on the roots of such an $f$.

The seven relevant conjugacy class shapes fall into two categories: regular semisimple and non-semisimple. A regular semisimple conjugacy class contains elements with a squarefree characteristic polynomial (and thus are cyclic by definition).  A class is not semisimple when the characteristic polynomial of its elements is not squarefree. 

We list the relevant conjugacy classes by the shape of their characteristic polynomial in Tables \ref{Tab:RegSSCharPolShape} (regular semisimple) and \ref{Tab:NonSSCharPolShape} (non-semisimple).  Each table also contains information about the multiplier $m$ and the type of the irreducible factors. 

\begin{table}[h]
\begin{tabular}{*3c}    \toprule
Char. Pol. Shape & Valid $m$ & Polynomial type \\
\hline
$[1]_1\dots[1]_{2g}$ & All $m$ & $\alpha$ pairs\\
$[2]_1\dots[2]_g $ & All $m$ & $\beta$ polynomials\\
$[g]_1[g]_2$ & All $m$ & $\alpha$ pair\\
$[2g]$ & All $m$ & $\beta$ polynomial \\
\bottomrule
\end{tabular}
\caption{Relevant characteristic polynomial shapes for regular semisimple conjugacy classes}
\label{Tab:RegSSCharPolShape}
\end{table}

\begin{table}[h]
\begin{tabular}{*3c}    \toprule
Char. Pol. Shape & Valid $m$ & Polynomial type \\
\hline
$[1]^{2g}$ & Square $m$ & Root polynomial\\
$[1]_1^g[1]_2^g $ & All $m$ & $\alpha$ pair\\
$[2]^g$ & All $m$ & $\beta$ polynomial \\
\bottomrule
\end{tabular}
\caption{Relevant characteristic polynomial shapes for non-semisimple conjugacy classes}
\label{Tab:NonSSCharPolShape}
\end{table}

\subsection{Centralizer orders}\label{SectionCentralizerOrders}

Denote by $\cclass$ a conjugacy class of matrices in $\GSp_{2g}(\F_\ell)$ with characteristic polynomial $f_\cclass(T)$.  For each relevant conjugacy class, we will find its order by instead finding the order of its centralizer. 
Let $\cent_{\GSp_{2g}(\Fell)}(\cclass)$ be the centralizer in $\GSp_{2g}(\Fell)$ of an element of $\cclass$.  Since we need only the size of the centralizer, the choice of this element is arbitrary.

\begin{lem}\label{RegCent} Let $\mathcal{C}$ be a regular semisimple conjugacy class with characteristic polynomial having one of the shapes listed in Table \ref{Tab:RegSSCharPolShape}.  Then we have 
$$\#\cent_{\GSp_{2g}(\Fell)}(\cclass) = \begin{cases}
(\ell-1)^{g + 1} & \text{ if $f_{\cclass}(T)$ has shape $[1]_1 \dots [1]_{2g}$}, \\
(\ell^2 - 1)(\ell+1)^{g-1} & \text{ if $f_{\cclass}(T)$ has shape $[2]_1\dots[2]_g$}, \\
(\ell^g - 1)(\ell-1) & \text{ if $f_{\cclass}(T)$ has shape $[g]_1[g]_2$}, \\
(\ell^g + 1)(\ell - 1) & \text{ if $f_{\cclass}(T)$ has shape $[2g]$}.
\end{cases}$$
\end{lem}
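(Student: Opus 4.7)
The plan is to handle each of the four shapes by first identifying the centralizer of a representative in $\GL_{2g}(\F_\ell)$ and then cutting down by the symplectic multiplier condition. In every case the matrix is regular semisimple and cyclic, so its centralizer in $\GL_{2g}(\F_\ell)$ is a maximal torus determined by the factorization of $f_{\cclass}(T)$: each irreducible factor of degree $d$ contributes a factor $\F_{\ell^d}^{\times}$, one for each invariant subspace on which that factor is the minimal polynomial. The $\GSp$-centralizer is then the subgroup of those tuples whose action preserves $J$ up to a common scalar $m\in\F_\ell^\times$.

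For the split case $[1]_1\cdots[1]_{2g}$, the eigenvalues pair via the $\alpha$-pair structure as $(\lambda_i,m/\lambda_i)$, so a diagonal representative has centralizer $\{\diag(a_1,\dots,a_g,b_1,\dots,b_g): a_ib_i=m,\ m\in\F_\ell^\times\}$. Free parameters are $a_1,\dots,a_g$ and $m$, giving $(\ell-1)^{g+1}$. For $[2]_1\cdots[2]_g$, the first step is to observe that each degree-$2$ $\beta$-factor is self-dual, so the $2$-plane it cuts out carries a symplectic form preserved up to scalar by the factor's companion action; the centralizer in $\GSp_2=\GL_2$ on that plane is $\F_{\ell^2}^\times$ and the multiplier is the norm $N_{\F_{\ell^2}/\F_\ell}$. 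The symplectic condition requires all $g$ blocks to have the same multiplier $m\in\F_\ell^\times$; since each norm map is surjective with kernel of size $\ell+1$, the count is $(\ell-1)(\ell+1)^g=(\ell^2-1)(\ell+1)^{g-1}$.

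For $[g]_1[g]_2$ (an $\alpha$-pair), the $2g$-space splits as $V_1\oplus V_2$ with $\dim V_i=g$, and here the key preliminary lemma I would invoke is that because the minimal polynomials on $V_1$ and $V_2$ are coprime and dual, the symplectic form must vanish on each $V_i$ and pair them perfectly. Identifying each $V_i$ with $\F_{\ell^g}$ so that the commutant acts by scalar multiplication by $a_i\in\F_{\ell^g}^\times$, the multiplier condition becomes $a_1 a_2=m\in\F_\ell^\times$, which forces $a_2=m/a_1$ with $a_1\in\F_{\ell^g}^\times$ and $m\in\F_\ell^\times$ free, yielding $(\ell^g-1)(\ell-1)$. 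For the shape $[2g]$, identify $V\cong\F_{\ell^{2g}}$ as an $\F_\ell$-vector space, with $M$ acting by multiplication by a generator $\pi$ and $\Gal(\F_{\ell^{2g}}/\F_\ell)\cong\Z/2g\Z$ generated by Frobenius; let $\tau$ be the unique element of order $2$, whose fixed field is $\F_{\ell^g}$. The $\beta$ condition $\bar f^m=f$ forces the involution $\lambda\mapsto m/\lambda$ on roots to coincide with $\tau$, so $\lambda\tau(\lambda)=m$ and the multiplier on $\lambda\in\F_{\ell^{2g}}^\times$ is $N_{\F_{\ell^{2g}}/\F_{\ell^g}}(\lambda)$. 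The centralizer in $\GSp_{2g}$ is therefore the preimage of $\F_\ell^\times\subset\F_{\ell^g}^\times$ under this norm map; since the norm is surjective with kernel of size $\ell^g+1$, the count is $(\ell^g+1)(\ell-1)$.

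The main obstacle I expect is the symplectic setup in cases 3 and 4: making rigorous the identification of the symplectic form with a trace pairing on $\F_{\ell^g}$-modules (or $\F_{\ell^{2g}}$) in a way that shows the multiplier map agrees with a field-theoretic norm. Everything else reduces to linear-algebra bookkeeping once the correct torus is in hand.
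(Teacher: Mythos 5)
Your proposal is correct and follows essentially the same route as the paper: the centralizer of a regular semisimple element is a maximal torus built from $\F_{\ell^d}^\times$ factors, and the $\GSp$ condition is imposed by requiring a common multiplier, counted via fibers of norm maps. The paper only works out the $[2g]$ case explicitly (identifying the centralizer with the elements of $\F_{\ell^{2g}}^\times$ whose $\F_{\ell^g}$-norm lies in $\F_\ell^\times$) and declares the rest similar, so your write-up simply fills in the remaining three cases with the same argument.
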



\begin{proof}
Since each class is regular and semisimple, their centralizers are tori.  For example, consider the case where $f_{\cclass}(T)$ has the shape $[2g]$.  Then $f_{\cclass}(T)$ has roots $\alpha,\alpha^\ell,\ldots,\alpha^{\ell^{2g-1}}$ in $\F^\times_{\ell^{2g}}$ in a single orbit under the action of Galois.  Since $\cclass\subseteq \GSp_{2g}(\Fell)$, it must be that the elements of $\cclass$ have multiplier $m = \alpha^{\ell^{i}}\alpha^{\ell^{g + i}}$ for $i \in \{0, 1, \dots, g - 1\}$.  Thus, elements of the centralizer of $\cclass$ are those where roots of the characteristic polynomial are elements of $\F^\times_{\ell^{2g}}$ with an $\F_{\ell^g}$-norm lying in $\F^\times_\ell$.  There are $\frac{\ell^{2g} - 1}{\ell^g - 1} = \ell^g + 1$ elements of $\F^\times_{\ell^{2g}}$ with such a norm for a fixed $m$. Since we have $\ell - 1$ choices for the multiplier, $\#\cent_{\GSp_{2g}(\Fell)}(\cclass) = (\ell^g + 1)(\ell - 1).$

The centralizer sizes for the other cases are computed similarly.
\end{proof}


In the case where $\cclass$ is not semisimple, the process for determining the order of its centralizer is significantly more challenging.  In these cases, we must construct an explicit matrix $\gamma$ which is a representative of $\cclass$, and verify that $\gamma$ has the correct characteristic polynomial, that $\gamma$ is cyclic, and that $\gamma\in\GSp_{2g}(\Fell)$.  Then we must find an explicit matrix $C$ which is a generic member of the centralizer of that $\gamma$ (also in $\GSp_{2g}(\Fell)$) and use it to count the number of possible elements of $\cent_{\GSp_{2g}(\Fell)}(\cclass)$.

For any particular $g$, this process is possible.  (As an example, the centralizer orders of the non-semisimple classes for $g=3$ are given in Proposition~\ref{NonSSCent}.)  However, we have not yet constructed representatives for all three non-semisimple classes for a general (odd prime) $g$ and thus do not have formulae for their centralizer orders.  We hope, in future work, to address this gap.

\section{Local factors for $f$}\label{Sec:LocalFactorsf}

In this section we define local factors $\nu_\ell(f)$ for each finite rational prime $\ell$ and one for the archimedean prime, $\nu_\infty(f)$.  For all $\ell\neq p$, this local factor is given by the density of elements of $\GSp_{2g}(\F_\ell)$ with a fixed multiplier and characteristic polynomial $f$ with respect to the ``average" frequency.  We also define $\nu_p(f)$ and $\nu_\infty(f)$ based on the same notions.  These definitions are in direct analogue with those of \cite{achterwilliams2015}, and are thus philosophically guided by \cite{gekeler03} as well.

\subsection{$\nu_\ell(f)$}

Suppose $\ell\neq p$ is a rational prime and consider a principally polarized abelian variety $X/\Fq$ of dimension $g$. The Frobenius endomorphism $\pi_{X/\Fq}$ of $X$ acts as an automorphism of $X_\ell$, and scales by a factor of $q$ the symplectic pairing on $X_\ell$ induced by the polarization.  Thus, we can consider $\pi_{X/\Fq}$ as an element of $\GSp_{2g}(\F_\ell)^{(q)}$ (the set of elements of $\GSp_{2g}(\F_\ell)$ with multiplier $q$).

There are $\ell^g$ possible characteristic polynomials for an element of $\GSp_{2g}(\F_\ell)^{(q)}$, and so the average frequency of a particular polynomial occurring as the characteristic polynomial of an element of the group (with respect to all such polynomials) is given by
$$\#\GSp_{2g}(\F_\ell)^{(q)} / \ell^g.$$
Then for primes $\ell$ unramified in $K_f$, we define $\nu_\ell(f)$ as 
\begin{equation}\label{definenuellf}
\nu_\ell(f) = \frac{\#\set{\gamma\in\GSp_{2g}(\F_\ell)^{(q)} \mid \charpol(\gamma)\equiv f \bmod \ell}}{\#\GSp_{2g}(\F_\ell)^{(q)}/\ell^g}.
\end{equation}
(See \eqref{redefinenuellf} for a definition for all $\ell\neq p$.)

\subsection{$\nu_p(f)$}

The definition of $\nu_p(f)$ is similar to but more intricate than \eqref{definenuellf}.  Under our assumptions, $X/\F_q$ is an ordinary abelian variety of (odd prime) dimension $g$ with characteristic polynomial of Frobenius 
$$f_X(T)=T^{2g} + c_1 T^{2g-1} + \dots +c_g T^g+qc_{g-1} T^{g-1} + \dots +q^{g-1} c_1 T + q^g.$$  
Thus, as in \cite{achterwilliams2015}, we have a canonical decomposition of the $p$-torsion group scheme into \'etale and toric components $X[p]\cong X[p]^{\text{et}}\oplus X[p]^{\text{tor}}$.  By ordinarity, $X[p]^{\text{et}}(\bar\F_q)\cong (\Z/p)^g$ and $(X[p]^{\text{tor}})^*(\bar\F_q)\cong (\Z/p)^g$, and the $q$-power Frobenius $\pi_{X/\Fq}$ acts invertibly on $X[p](\bar\F_q)$.  This action of $\pi_{X/\Fq}$ on both the \'etale and toric components of $X[p]$ has characteristic polynomial $g_X(T)=T^g + c_1 T^{g-1} + \dots +c_g \bmod p$, and must preserve the decomposition of $X[p]$.  Let $m_g$ be the multiplier of $g_X$ (so $m_g$ is a $g^{\text{th}}$ root of $c_g^2$).  Thus, we set 

\begin{equation}\label{definenupf}
\nu_p(f)=\frac{\#\set{\gamma\in\GSp_{2g}(\Fp)^{(m_g)} \mid \charpol(\gamma)\equiv (g_X)^2 \bmod p \text{ and $\gamma$ semisimple}}}{\#\GSp_{2g}(\Fp)^{(m_g)}/p^g}.
\end{equation}

%

\subsection{$\nu_\infty(f)$}

Lastly, we define an archimedean term, which is related to the Sato-Tate measure.  As stated in \cite{achterwilliams2015}, the Sato-Tate measure on abelian varieties conjecturally explains the distribution of Frobenius elements, and is a pushforward of Haar measure on the space of ``Frobenius angles", $0\leq \theta_1\leq \dots \leq \theta_g \leq \pi$.  The Weyl integration formula \cite[p218, 7.8B]{weyl} gives the Sato-Tate measure on abelian varieties of dimension $g$ explicitly as 
$$\mu_{ST}(\theta_1, \dots, \theta_g) = 2^{g^2}\left( \prod_{j < k} (\cos(\theta_j) - \cos(\theta_k))^2 \right)\prod_{i = 1}^g \left(\frac{1}{\pi}\sin^2(\theta_i)d\theta_i\right).$$

Fixing a particular $q$, the set of angles $\set{\theta_1, \dots ,\theta_g}$ gives rise to a $q$-Weil polynomial. We use the induced measure on the space of all such polynomials to define the archimedean term $\nu_\infty(f)$. To derive this induced measure, we first write the polynomial in terms of its roots and in terms of its coefficients as 
\begin{align*}
f(T) &= \prod_{j = 1}^g (T - \sqrt{q}e^{i\theta_j})(T - \sqrt{q}e^{-i\theta_j}) \\
&= T^{2g} + c_1 T^{2g - 1} + \dots + c_gT^g + c_{g - 1}qT^{g - 1} + \dots + c_1q^{g-1}T + q^g,
\end{align*}
and perform a change of variables.
Thus, we find our induced measure on the space of all $q$-Weil polynomials to be 
$$\mu(c_1, \dots, c_g) = \frac{1}{q^{g^2}(2\pi)^g}\sqrt{\abs{\frac{\disc(f)}{\disc(f^+)}}} \, dc_1 \dots dc_g.$$

Note that there are approximately $q^{\dim \mathcal{A}_g} = q^{\frac{g(g+1)}{2}}$ principally polarized abelian varieties over $\Fq$, so $q^{\frac{g(g+1)}{2}}\mu(c_1, \dots, c_g)$ can be thought of as an archimedean predictor for $\#\A_g(\Fq; f)$. Then we define
\begin{equation}\label{nuinfinity}
\nu_\infty(f) = \frac{1}{\cond(f) (2\pi)^g} \sqrt{\abs{\frac{\disc(f)}{\disc(f^+)}}}.
\end{equation} 
(We note that definition \eqref{nuinfinity} holds for any $g$, not just odd prime $g$.)

\section{Polynomials and primes in $K$}

Fix a $q$-Weil polynomial $f(T)$ which satisfies conditions \eqref{enord}-\eqref{enmax}.  For the remainder of the paper, we write $K$ for $K_f$, $\O_K$ for $\O_f$, $K^+$ for $K^+_f$, $\Delta_K$ for $\Delta_{\O_K}$, and $\Delta_{K^+}$ for $\Delta_{\O_{K^+}}$.  Let $\kappa_\ell=\O_K\otimes\F_\ell$, a $2g$-dimensional vector space over $\F_\ell$.

Our goal in this section is to relate the polynomial $f(T) \bmod \ell$ to (a representative of) one of the conjugacy classes defined in section \ref{conjclasses}.  There are two lenses through which we can consider such a correspondence, as outlined in \cite[Section 5]{achterwilliams2015}.
%
%
%
Regardless of the perspective, we will use the factorization of $f(T)\bmod\ell$ to determine a cyclic element of $\GSp_{2g}(\F_\ell)$ whose semisimplification is conjugate to $\gamma_\ell$, the image of the action of $\pi_f$ on $\kappa_\ell$.  Then we define
\begin{equation}\label{redefinenuellf}
\nu_\ell(f)= \frac{\#\set{\gamma\in\GSp_{2g}(\F_\ell) \mid \gamma \text{ is cyclic with semisimplification } \gamma_\ell}}{\#\GSp_{2g}(\Fell)^{(q)}/\ell^g}.
\end{equation}

\begin{lem}
If $\ell\nmid p\Delta_K$, then definitions \eqref{definenuellf} and \eqref{redefinenuellf} coincide.
\end{lem}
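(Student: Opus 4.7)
The plan is to show that the hypothesis $\ell\nmid p\Delta_K$ forces $f(T)\bmod \ell$ to be squarefree, at which point the \emph{cyclic with semisimplification $\gamma_\ell$} condition of \eqref{redefinenuellf} collapses to the \emph{characteristic polynomial $\equiv f$, multiplier $q$} condition of \eqref{definenuellf}, with the denominators agreeing on the nose.

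First I would verify squarefreeness. Combining Howe's formula $\disc(f)=\cond(f)^2\Delta_{\O_f}$ (used in the proof of Lemma \ref{lemconductor}) with condition \eqref{enmax} giving $\O_f=\O_K$, and Lemma \ref{lemconductor} giving $\cond(f)=q^{g(g-1)/2}$, I obtain $\disc(f)=q^{g(g-1)}\Delta_K$. Since $\ell\neq p$ the factor $q^{g(g-1)}$ is a unit mod $\ell$, and the assumption $\ell\nmid\Delta_K$ then forces $\ell\nmid\disc(f)$, so $f\bmod\ell$ has $2g$ distinct roots in $\bar\F_\ell$. Next I would identify $\gamma_\ell$ concretely. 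By Corollary \ref{lemalmostmaximal}, $\kappa_\ell=\O_K\otimes\F_\ell\cong\F_\ell[T]/f(T)$, and $\pi_f$ acts as multiplication by $T$, so $\gamma_\ell$ has characteristic polynomial $f\bmod\ell$ and multiplier $q$ (the latter because $\pi_f\bar\pi_f=q$ governs the induced symplectic pairing on $\kappa_\ell$). Squarefreeness of $f\bmod\ell$ then makes $\gamma_\ell$ regular semisimple, in particular already cyclic and equal to its own semisimplification.

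Finally I would match the two sets of matrices. Any $\gamma\in\GSp_{2g}(\F_\ell)$ with $\charpol(\gamma)\equiv f\bmod\ell$ inherits squarefreeness of its characteristic polynomial, hence is automatically regular semisimple, cyclic, and equal to its own semisimplification. By Theorem \ref{ConjClassCharacterization}, the $\GSp_{2g}(\F_\ell)$-conjugacy class of such a matrix is uniquely determined by the characteristic polynomial together with the multiplier, so any $\gamma\in\GSp_{2g}(\F_\ell)^{(q)}$ with $\charpol(\gamma)\equiv f\bmod\ell$ is conjugate to $\gamma_\ell$ and so lies in the numerator of \eqref{redefinenuellf}. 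Conversely any $\gamma$ that is cyclic with semisimplification $\gamma_\ell$ shares the characteristic polynomial $f\bmod\ell$ and multiplier $q$ of $\gamma_\ell$, placing it in the numerator of \eqref{definenuellf}. The only substantive step is this last invocation of Theorem \ref{ConjClassCharacterization}, and in the regular semisimple case it is the easy one: no partition data and no bilinear-form refinement intervenes when $f\bmod\ell$ is squarefree, so the characteristic polynomial plus multiplier already pins down the $\GSp$-class.
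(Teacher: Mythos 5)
Your proposal is correct and follows essentially the same route as the paper's proof: use $\ell\nmid p\Delta_K$ to get $\ell\nmid\disc(f)$, conclude $f\bmod\ell$ is squarefree so every matrix with that characteristic polynomial is regular semisimple (hence cyclic and its own semisimplification), and then note that such matrices form a single conjugacy class containing $\gamma_\ell$. You merely fill in two details the paper leaves implicit, namely the identity $\disc(f)=q^{g(g-1)}\Delta_K$ from Lemma~\ref{lemconductor} and the explicit appeal to Theorem~\ref{ConjClassCharacterization} in place of the paper's reference to Table~\ref{Tab:RegSSCharPolShape}.
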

\begin{proof}
If $\ell\nmid p\Delta_K$ then $\ell\nmid\disc(f)$ and so $f(T)\bmod\ell$ has distinct roots.  Under condition \eqref{encyclic}, any factorization of $f(T)\bmod\ell$ with distinct roots appears in Table \ref{Tab:RegSSCharPolShape}, and so any element with characteristic polynomial $f(T)\bmod\ell$ is conjugate to $\gamma_\ell$.  All regular semisimple elements are cyclic, so the lemma is proven.
\end{proof}

Note that $\charpol(\gamma_\ell)$ is precisely $f(T)\bmod\ell$.  Also note that $\kappa_\ell=\O_K/\ell\cong \F_\ell[T]/f(T)$ by Corollary \ref{lemalmostmaximal}, so the factorization of $f(T)\bmod\ell$ is determined by the splitting of $\ell$ in $\O_K$.  That is, if $f(T)\mod\ell=\prod_{1\leq j\leq r} g_j(T)^{e_j}$, then $\ell=\prod_{1\leq j\leq r} \lambda_j^{e_j}$ for primes $\lambda_j$ of $\O_K$ where the residue degree of $\lambda_j$ equals the degree of the irreducible polynomial $g_j(T)$.  

Because of Condition \eqref{encyclic}, $K/\Q$ is a finite Galois extension with $\Gal(K/\Q)\cong \Z/2g\Z$.  Then the residue degrees of the $\lambda_j$ are all equal to a common value $\frak{f}$, and the ramification degrees of the $\lambda_j$ are all equal to a common value $e$.  In particular $2g=e\frak{f}r$.  (Notice that this restriction is precisely how we identified relevant class shapes in Section \ref{conjclasses}.)  Without loss of generality, let $\lambda$ be a prime of $\O_K$ over $\ell$.  Let $D(\ell)$ and $I(\ell)$ denote the decomposition and inertia groups, respectively, of a rational prime $\ell$.  Lastly, if $\ell\nmid \disc(f)$, then $\Gal(\kappa(\lambda)/\F_\ell)$ (the residue field of $\lambda$) is cyclic.  Let $\Frob_K(\ell)\in\Gal(K/\Q)$ be the element which induces the generator of this group, and call it the Frobenius endomorphism of $\lambda$ over $\ell$.

Let $\Gal(K/\Q)=\gen{\sigma}$ so that complex conjugation is given by $\iota=\sigma^g$.  We classify the splitting of rational primes of $K$ by enumerating the possibilities for $D(\ell)$ and $I(\ell)$.  

\begin{lem}\label{lemFrob}
Suppose $f$ satisfies Conditions \eqref{enord}-\eqref{enmax}.  Let $\ell\neq p$ be a rational prime.  The cyclic shape of $\gamma_\ell$ is determined by the decomposition and inertia groups $D(\ell)$ and $I(\ell)$ as in Table \ref{Tab:Frob}.
\end{lem}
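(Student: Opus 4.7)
The plan is to read off the shape of $f(T)\bmod\ell$, which is exactly the characteristic polynomial of $\gamma_\ell$, directly from the pair $(D(\ell),I(\ell))$ via the standard correspondence for Galois extensions, and then to verify each resulting shape against the seven cases in Tables~\ref{Tab:RegSSCharPolShape} and~\ref{Tab:NonSSCharPolShape}. By Corollary~\ref{lemalmostmaximal}, $\kappa_\ell\cong\F_\ell[T]/f(T)$, so the factorization of $f(T)\bmod\ell$ matches the factorization of $\ell\O_K$. Because $K/\Q$ is cyclic Galois of degree $2g$, the triple $(e,\mathfrak{f},r)$ is the same at every prime above $\ell$, with $e=|I(\ell)|$, $\mathfrak{f}=[D(\ell):I(\ell)]$, and $2g=e\mathfrak{f}r$, so $f(T)\bmod\ell=\prod_{j=1}^{r}g_j(T)^{e}$ for distinct irreducibles $g_j$ of degree $\mathfrak{f}$.

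First I would enumerate all candidate pairs. Since $g$ is an odd prime, $\Gal(K/\Q)\cong\Z/2g\Z$ has exactly four subgroups: the trivial group, $\langle\iota\rangle=\langle\sigma^g\rangle$ of order $2$, $\langle\sigma^2\rangle$ of order $g$, and the whole group. Because $\Gal(K/\Q)$ is abelian, every inclusion $I\subseteq D$ is allowed, giving nine candidate pairs $(I,D)$. For each pair, the formulas above immediately determine the triple $(e,\mathfrak{f},r)$ and hence the cyclic shape, producing a candidate row of the table.

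The next step is to rule out two of the nine candidates. The pair $(I,D)=(\langle\iota\rangle,\Gal(K/\Q))$ produces the shape $[g]^2$: a single irreducible factor of odd degree $g$ that would have to be a $\beta$ polynomial, contradicting Lemma~\ref{NoOddBetas}. The pair $(I,D)=(\langle\iota\rangle,\langle\iota\rangle)$ produces the shape $[1]_1^2\cdots[1]_g^2$; here I would use the fact that inertia is generated by $\iota$, which sends each root $\pi_f^{\sigma^j}$ to $\pi_f^{\sigma^{j+g}}=q/\pi_f^{\sigma^j}$, so identifying these two under the inertia action forces every residue $\alpha$ to satisfy $\alpha^2\equiv q\bmod\ell$. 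This means every linear factor would be a root polynomial $T\mp\sqrt{q}$, leaving at most two distinct residues, which is incompatible with $g\geq 3$ distinct linear factors.

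The remaining seven pairs produce precisely the seven shapes listed in Tables~\ref{Tab:RegSSCharPolShape} and~\ref{Tab:NonSSCharPolShape}. For each one I would confirm the $\alpha$, $\beta$, and root classification by checking which irreducible factors are self-dual under $T\mapsto qT^{-1}$, and note that (by Theorem~\ref{ConjClassCharacterization} and the remark following it) each shape indexes a unique cyclic conjugacy class in $\GSp_{2g}(\F_\ell)$ when $g$ is an odd prime. The main obstacle I anticipate is not the logic but the bookkeeping: tracking which subgroup corresponds to which exponent regime $e\in\{1,2,g,2g\}$ and verifying that the two excluded pairs are exactly the ones listed above, so that the table of correspondences is complete and unambiguous.
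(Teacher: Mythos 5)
Your proposal is correct and takes essentially the same route as the paper: enumerate the pairs $I(\ell)\subseteq D(\ell)$ of subgroups of the cyclic group $\Gal(K/\Q)\cong\Z/2g\Z$, read off $(e,\mathfrak{f},r)$ from the standard decomposition theory, and match the resulting factorization of $f\bmod\ell$ to the shapes in Table~\ref{Tab:Frob}. You are in fact a bit more explicit than the paper's own proof, which tabulates only the seven admissible pairs and leaves the exclusion of $[g]^2$ and $[1]_1^2\cdots[1]_g^2$ to the earlier discussion of relevant shapes; your exclusion of $[g]^2$ via Lemma~\ref{NoOddBetas} coincides with the paper's, while your argument that $I(\ell)=D(\ell)=\gen{\iota}$ would force $g\geq 3$ distinct residues satisfying $\alpha^2\equiv q\pmod\ell$ is a valid (and arguably more direct) alternative to the paper's transitivity argument.
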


\begin{center}
\begin{table}[h]
\begin{tabular}{*5l}    \toprule
$D(\ell)$ & $I(\ell)$ & $\Frob_K(\ell)$ & $(e,\frak{f},r)$ & Class shape  \\\midrule
$\set{1}$ &$\set{1}$ & $1$ & $(1, 1, 2g)$ & $[1]_1\dots[1]_{2g}$ \\
$\gen{\sigma^g}$ &$\set{1}$ & $\sigma^g$ & $(1,2,g)$ & $[2]_1\dots[2]_g$ \\  
$\gen{ \sigma^2 }$ &$\set{ 1 }$ & $\sigma^2$ & $(1, g, 2)$ & $[g]_1[g]_2$ \\
$\gen{ \sigma^2 }$ & $\gen{ \sigma^2 }$ &  - & $(g, 1, 2)$ & $[1]_1^g[1]_2^g$\\
$\gen{ \sigma }$ & $\set{ 1 }$ & $\sigma^i$ for $(i,g)=1$  & $(1, 2g, 1)$ & $[2g]$\\
$\gen{ \sigma }$ & $\gen{ \sigma^2 }$ & - & $(g,2,1)$ & $[2]^g$\\
$\gen{ \sigma } $ & $\gen{ \sigma }$ & - & $(2g, 1, 1)$ & $[1]^{2g}$\\\bottomrule
\end{tabular}
\caption{Prime factorizations and conjugacy class shapes for $K$.}
\label{Tab:Frob}
\end{table}
\end{center}

Note that in every case, the data $D(\ell)$ and $I(\ell)$ determines a unique conjugacy class from those given in Tables \ref{Tab:RegSSCharPolShape} and \ref{Tab:NonSSCharPolShape}.

\begin{proof}
In Table \ref{Tab:Frob}, we enumerated all possibilities for pairs of subgroups $I(\ell)\subseteq D(\ell) \subseteq \Gal(K/\Q)$.  In every case, there are $r=\#\Gal(K/\Q)/\#D(\ell)$ distinct irreducible factors of $f(T)\bmod\ell$, each with degree $\frak{f}=\#D(\ell)/\#I(\ell)$ and multiplicity $e=\#I(\ell)$.  In all cases, this factorization pattern exactly determines the conjugacy class for which $\gamma_\ell$ is a representative.
\end{proof}

\section{Local factors for $K$}\label{Sec:LocalFactorsK}

Recall that $K/\Q$ is a finite Galois extension with $\Gal(K/\Q)\cong \Z/2g\Z$.  Then $K$ has two unique subfields, the maximal totally real field $K^+$ of index 2 in $K$, and a complex quadratic extension of $\Q$ which we will call $K_1$ in what follows.

Let $X(K)$ be the character group of the Galois group of $K$. For $\chi \in X(K)$, let $K^\chi$ be the fixed field of $\ker(\chi)$. For a rational prime $\ell$, define
$$\chi(\ell) = 
\begin{cases}
\chi(\Frob_{K^\chi}(\ell)) & \text{if $\ell$ is unramified in $K^\chi$,} \\
0 & \text{otherwise.}
\end{cases}$$

Let $S(K) = X(K) \smallsetminus X(K^+)$ and define 
\begin{equation}\label{Eqn:definenuellK}
\nu_{\ell}(K) = \prod_{\chi \in S(K)}\left(1-\frac{\chi(\ell)}{\ell}\right)^{-1}.
\end{equation}
Recall that $\sigma$ is a generator of $\Gal(K/\Q)$ and let $\chi$ be a generator of $X(K)$. Then $\langle \sigma^2 \rangle = \Gal(K^+/\Q)$ and $\langle \chi^2 \rangle = X(K^+)$, so $S(K) = \{\chi, \chi^3, \dots, \chi^{2g - 1}\}.$ A quick computation shows $K^{\chi^g} = K_1$ and $K^{\chi^i} = K$ for all other $\chi^i \in S(K)$. We have $\chi^{i}(\ell) = (\chi(\ell))^i$ for all odd $i \neq g$, and so to compute $\chi^i(\ell)$ for odd $i$, we only need to know $\chi(\ell)$ and $\chi^g(\ell).$ 

\begin{lem}
The values of $\chi(\ell)$ and $\chi^g(\ell)$ are determined by $D(\ell)$ and $I(\ell)$, as given in Table~\ref{Tab:Chars}.
\end{lem}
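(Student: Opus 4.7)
The plan is to handle each of the seven rows of Table~\ref{Tab:Frob} directly from the definitions, using only the identification of the relevant fields $K^\chi$ and $K^{\chi^g}$ and the surjection $\Gal(K/\Q)\twoheadrightarrow\Gal(K_1/\Q)$. First I would fix a generator $\sigma$ of $\Gal(K/\Q)\cong\Z/2g\Z$ and normalize the generator $\chi$ of $X(K)$ so that $\chi(\sigma)=\zeta_{2g}$ for a fixed primitive $2g$-th root of unity. Then $\chi$ is faithful, so $K^\chi=K$, while $\chi^g$ has order two with kernel $\gen{\sigma^2}$, so $K^{\chi^g}=K_1$ and $\chi^g$ descends to the unique nontrivial character of $\Gal(K_1/\Q)\cong\Gal(K/\Q)/\gen{\sigma^2}$.

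For $\chi(\ell)$ the computation is immediate: by definition $\chi(\ell)=0$ whenever $\ell$ ramifies in $K^\chi=K$, i.e.\ whenever $I(\ell)\neq\set{1}$, and otherwise $\chi(\ell)=\zeta_{2g}^j$ where $\Frob_K(\ell)=\sigma^j$ is read off from Table~\ref{Tab:Frob}. This handles the four rows with $I(\ell)=\set{1}$ directly and kills $\chi(\ell)$ in the three ramified rows. In the row where $D(\ell)=\gen{\sigma}$ and $I(\ell)=\set{1}$, the exponent $j$ is only determined up to a generator of $\Z/2g\Z$, but every admissible value is odd and coprime to $g$, which is exactly the ambiguity recorded by the table.

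For $\chi^g(\ell)$ I would pass through the quotient $\Gal(K/\Q)\twoheadrightarrow\Gal(K_1/\Q)$ with kernel $\gen{\sigma^2}$: the inertia and decomposition groups $I_{K_1}(\ell),D_{K_1}(\ell)$ are just the images of $I(\ell),D(\ell)$. Ramification of $\ell$ in $K_1$ therefore occurs exactly when $I(\ell)\not\subseteq\gen{\sigma^2}$, which inspection of Table~\ref{Tab:Frob} shows is only the case $I(\ell)=\gen{\sigma}$; this is the single row that forces $\chi^g(\ell)=0$. In the other six rows $\ell$ is unramified in $K_1$, and $\Frob_{K_1}(\ell)$ equals the image of any generator of $D(\ell)/I(\ell)$ under the quotient. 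That image is trivial when $D(\ell)\subseteq\gen{\sigma^2}$ (giving $\chi^g(\ell)=1$) and the nontrivial element of $\Gal(K_1/\Q)$ otherwise (giving $\chi^g(\ell)=-1$); in the row $D(\ell)=\gen{\sigma}$, $I(\ell)=\set{1}$ this is independent of the choice of generator $\sigma^i$ because $(-1)^i=-1$ for every odd $i$.

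The only substantive content is the transfer of decomposition and inertia data from $K$ down to the quadratic subfield $K_1$, and this is forced by abelianness of $\Gal(K/\Q)$ together with the fact that $g$ is odd, which is what guarantees $\sigma^g\notin\gen{\sigma^2}$ and thus that $\iota$ acts nontrivially on $K_1$. Once this bookkeeping is in place, every entry of Table~\ref{Tab:Chars} is a direct evaluation; there is no calculational obstacle beyond being careful, case by case, to check both whether $\ell$ is unramified in the fixed field in question and to evaluate $\chi$ or $\chi^g$ on the resulting Frobenius.
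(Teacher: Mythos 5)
Your proposal is correct and follows essentially the same route as the paper's proof, which simply invokes the definitions of $\chi$ and $\chi^g$, the Frobenius data of Table~\ref{Tab:Frob}, and cyclicity of $\Gal(K/\Q)$ and $X(K)$; you have merely made explicit the bookkeeping of pushing $D(\ell)$ and $I(\ell)$ down to $K_1$ and the observation that the generator ambiguity in the row $D(\ell)=\gen{\sigma}$, $I(\ell)=\set{1}$ is harmless. Your case analysis reproduces every entry of Table~\ref{Tab:Chars}, so nothing further is needed.
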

\begin{proof}
The values in the table follow from the definitions of $\chi$ and $\chi^g$ above, the Frobenius elements given in Table \ref{Tab:Frob}, and the fact that $\Gal(K/\Q)$ and $X(K)$ are cyclic.  Then when $\Frob_K(\ell)$ generates $\Gal(K/\Q)$, $\chi(\ell)$ is a primitive $2g^{th}$ root of unity.  In particular, the values in Table~\ref{Tab:Chars} are independent of the choice of generator for each of $D(\ell)$, $I(\ell)$, and $X(K)$.
\end{proof}

\begin{table}[h]
\begin{tabular}{*4l}    \toprule
$D(\ell)$ & $I(\ell)$ & $\{\chi(\ell),\chi^g(\ell)\}$ & Class shape  \\\midrule
$\set{1}$ &$\set{1}$ & $\{1,1\}$ & $[1]_1\dots[1]_{2g}$ \\
$\gen{\sigma^g}$ &$\set{1}$ & $\{-1,-1\}$ & $[2]_1\dots[2]_g$ \\  
$\gen{\sigma^2}$ &$\set{1}$ & $\{e^{2\pi i/g},1\}$ & $[g]_1[g]_2$ \\
$\gen{\sigma^2}$ & $\gen{\sigma^2}$ & $\{0 ,1\}$ & $[1]_1^g[1]_2^g$\\
$\gen{\sigma}$ & $\set{1}$ & $\{e^{\pi i/g} ,-1\}$ & $[2g]$\\
$\gen{\sigma}$ & $\gen{\sigma^2}$ & $\{0 ,-1\}$ & $[2]^g$\\
$\gen{\sigma}$ & $\gen{\sigma}$ & $\{0,0\}$ & $[1]^{2g}$\\\bottomrule
\end{tabular}
\vspace{2mm}
\caption{Values of imaginary characters for $K$.}
\label{Tab:Chars}
\end{table}

\section{Matching}

In this section, we will prove a series of propositions to establish equalities between the local factor defined for $f$ in Section \ref{Sec:LocalFactorsf} and the local term intrinsic to $K = K_f$ defined in Section \ref{Sec:LocalFactorsK} both for general odd prime $g$ and for the specific case when $g=3$.

\subsection{General case}

Let $g$ be an odd prime.  

In Proposition \ref{PropMatchGeneralCaseForEll}, we must restrict to regular semisimple conjugacy classes because we only have $\#\cent_{\GSp_{2g}(\Fell)}(\cclass)$ for all odd prime $g$ in those cases.

\begin{prop}\label{PropMatchGeneralCaseForEll}
Suppose $f$ is a $q$-Weil polynomial of degree $2g$ such that $f\mod\ell=f_\cclass$ for one of the conjugacy class shapes in Table  \ref{Tab:RegSSCharPolShape}.  If $\ell\neq p$ then $\nu_\ell(f)=\nu_\ell(K)$.
\end{prop}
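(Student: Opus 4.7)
The plan is a direct case-by-case computation against the four shapes in Table \ref{Tab:RegSSCharPolShape}. First I would rewrite $\nu_\ell(f)$ using orbit-stabilizer: in the regular semisimple case, every element with a given squarefree characteristic polynomial is conjugate, so the set in the numerator of \eqref{definenuellf} is a single $\GSp_{2g}(\F_\ell)$-conjugacy class $\cclass$ of size $\#\GSp_{2g}(\F_\ell)/\#\cent_{\GSp_{2g}(\F_\ell)}(\cclass)$. This class lies entirely in $\GSp_{2g}(\F_\ell)^{(q)}$, since the multiplier is conjugation-invariant and is read off from the constant term of $f$. Combined with $\#\GSp_{2g}(\F_\ell)^{(q)}=\#\GSp_{2g}(\F_\ell)/(\ell-1)$, this collapses \eqref{definenuellf} to the clean formula
$$\nu_\ell(f) \;=\; \frac{(\ell-1)\,\ell^g}{\#\cent_{\GSp_{2g}(\F_\ell)}(\cclass)}.$$

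Plugging in the centralizer orders from Lemma \ref{RegCent} gives, for the shapes $[1]_1\dots[1]_{2g}$, $[2]_1\dots[2]_g$, $[g]_1[g]_2$, and $[2g]$ respectively, the values $\ell^g/(\ell-1)^g$, $\ell^g/(\ell+1)^g$, $\ell^g/(\ell^g-1)$, and $\ell^g/(\ell^g+1)$.

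For $\nu_\ell(K)$, I would use the observation in the text that $S(K) = \{\chi, \chi^3, \dots, \chi^{2g-1}\}$ together with $\chi^i(\ell)=\chi(\ell)^i$ for odd $i\neq g$, reading $\chi(\ell)$ and $\chi^g(\ell)$ from Table \ref{Tab:Chars}. The first two shapes are immediate: every character in $S(K)$ contributes the same Euler factor $(1-1/\ell)^{-1}$ or $(1+1/\ell)^{-1}$, giving $(\ell/(\ell-1))^g$ or $(\ell/(\ell+1))^g$. For $[g]_1[g]_2$, $\chi(\ell)$ is a primitive $g$-th root of unity $\zeta$ with $\chi^g(\ell)=1$; since $g$ is odd the odd residues $1,3,\dots,2g-1$ exhaust every class modulo $g$, so the cyclotomic identity $\prod_{a=0}^{g-1}(1-\zeta^a X)=1-X^g$ evaluated at $X=1/\ell$ yields $\nu_\ell(K)=\ell^g/(\ell^g-1)$. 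For $[2g]$, $\chi(\ell)$ is a primitive $2g$-th root of unity $\omega$ with $\chi^g(\ell)=-1$; factoring $\prod_{i=0}^{2g-1}(1-\omega^i X) = 1-X^{2g} = (1-X^g)(1+X^g)$ and canceling the even-index factors (which give $1-X^g$) leaves $\prod_{i\ \mathrm{odd}}(1-\omega^i X) = 1+X^g$, so evaluating at $X=1/\ell$ gives $\nu_\ell(K)=\ell^g/(\ell^g+1)$.

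Matching these values shape-by-shape against the earlier list establishes the proposition. No serious obstacle is anticipated; the only subtlety is in the last two cases, where one must correctly identify $\chi(\ell)$ as a primitive $g$-th or $2g$-th root of unity and verify (using that $g$ is odd) that the odd exponents cover the appropriate residues so that the cyclotomic product identities apply cleanly.
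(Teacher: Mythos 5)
Your proposal is correct and follows essentially the same route as the paper's proof: orbit--stabilizer reduces $\nu_\ell(f)$ to $\ell^g(\ell-1)/\#\cent_{\GSp_{2g}(\Fell)}(\cclass)$ with the centralizer orders from Lemma \ref{RegCent}, and $\nu_\ell(K)$ is evaluated from Table \ref{Tab:Chars} using the same cyclotomic factorization $\prod_{i \text{ odd}}(1-z^i X)=1+X^g$ for the $[g]_1[g]_2$ and $[2g]$ cases. The only difference is that you spell out a few steps the paper leaves implicit (that the numerator of \eqref{definenuellf} is a single conjugacy class contained in the multiplier-$q$ coset, and that $\#\GSp_{2g}(\Fell)^{(q)}=\#\GSp_{2g}(\Fell)/(\ell-1)$), which is a welcome clarification but not a different argument.
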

\begin{proof}
Let $\cclass$ be a conjugacy class from Table \ref{Tab:RegSSCharPolShape}. Using the orbit-stabilizer theorem, we can rewrite $\nu_\ell(f)$ in terms of $\#\cent_{\GSp_{2g}(\Fell)}(\cclass)$ as
\begin{equation*}
\nu_\ell(f) = \frac{\#\cclass}{\#\GSp_{2g}(\F_\ell)^{(q)}/\ell^g} =  \frac{\#\GSp_{2g}(\Fell)/\#\cent_{\GSp_{2g}(\Fell)}(\cclass)}{\#\GSp_{2g}(\F_\ell)^{(q)}/\ell^g} = \frac{\ell^g(\ell - 1)}{\#\cent_{\GSp_{2g}(\Fell)}(\cclass)}.
\end{equation*}
Since we found the centralizer orders for each regular semisimple class in Theorem~\ref{RegCent}, we use this expression for $\nu_\ell(f)$ to calculate the third column of Table~\ref{Tab:Match}. 

Additionally, from \eqref{Eqn:definenuellK} we have
\begin{equation*}\nu_{\ell}(K) = \prod_{\chi \in S(K)}\left(1-\frac{\chi(\ell)}{\ell}\right)^{-1} = \left(\frac{\ell}{\ell - \chi(\ell)} \right) \left(\frac{\ell}{\ell - \chi^3(\ell)} \right) \cdots \left(\frac{\ell}{\ell - \chi^{2g-1}(\ell)} \right).
\end{equation*}
The fourth column of Table~\ref{Tab:Match} contains the relevant values from Table~\ref{Tab:Chars} from which we compute $\nu_\ell(K)$ in the fifth column. 

To compute the third and fourth rows of the fifth column of Table~\ref{Tab:Match}, recall that if $z$ is a primitive $(2g)^{th}$ root of unity, then $z^2$ is a primitive $g^{th}$ root of unity. Then since $(\ell^{2g} - 1) = (\ell^g - 1)(\ell^g + 1)$,
$$\ell^{2g} - 1 = \prod_{j = 1}^{2g} (\ell - z^j), \ \text{and} \ \ell^g - 1 = \prod_{j = 1}^{g} (\ell - z^{2j}),$$ 
we must have $$(\ell^g + 1) = \prod_{j = 1}^{g} (\ell - z^{2j - 1}).$$
We see the third and fifth columns of Table~\ref{Tab:Match} match, so the proposition is proved.
\end{proof}

\begin{center}
\begin{table}[h]
\begin{tabular}{*5c}    \toprule
Class Shape & $\#\cent(\cclass)$ & $\nu_{\ell}(f)$  &  $\{\chi(\ell),\chi^g(\ell)\}$ & $\nu_\ell(K)$  \\  \midrule
$[1]_1\dots[1]_{2g}$ & $(\ell-1)^{g + 1}$ & $\frac{\ell^g}{(\ell-1)^g}$ & $\{1,1\}$ &  $\left(\frac{\ell}{\ell - 1}\right)^g$ \\
$[2]_1\dots[2]_g $ & $(\ell^2 - 1)(\ell+1)^{g-1}$ & $\frac{\ell^g}{(\ell + 1)^g}$ & $\{-1,-1\}$ & $\left(\frac{\ell}{\ell + 1}\right)^g$  \\  
$[g]_1[g]_2$ & $(\ell^g - 1)(\ell-1)$ & $\frac{\ell^g}{\ell^g - 1}$ & $\{e^{2\pi i/g}, 1\}$ &  $ \frac{\ell^g}{\ell^g - 1}$  \\
 $[2g]$ & $(\ell^g + 1)(\ell - 1)$ & $\frac{\ell^g}{\ell^g + 1}$ & $\{e^{\pi i/g}, -1\}$ & $\frac{\ell^g}{\ell^g + 1}$  \\  \bottomrule
\end{tabular}
\hspace{2mm}
\caption{Evaluating $\nu_{\ell}(f)$ and $\nu_\ell(K)$ for regular semisimple classes.}
\label{Tab:Match}
\end{table}
\end{center}

In order to show that $\nu_p(f)=\nu_p(K)$, we first must determine the possible factorizations of $p$ in $\O_K$.  

\begin{lem}\label{lemprimesplitsKplustoK}
Let $K/\Q$ be a degree $2g$ CM number field, and let $X$ be an ordinary abelian variety of dimension $g$ over $\Fq$.  Suppose $K$ acts on $X$.  Then any prime of $K^+$ over $p$ splits in $K$.
\end{lem}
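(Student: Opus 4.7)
The plan is a proof by contradiction. Suppose some prime $\mathfrak{p}$ of $K^+$ above $p$ does not split in the quadratic CM extension $K/K^+$; then there is a unique prime $\mathfrak{P}$ of $\O_K$ lying above $\mathfrak{p}$, and this $\mathfrak{P}$ is necessarily fixed by the generator $\iota$ of $\Gal(K/K^+)$, i.e., by complex conjugation. Writing $\pi\in K$ for the image of the $q$-power Frobenius of $X$ (which lies in $K$ because $K$ acts on $X$ and $[K:\Q]=2\dim X$), we have $\pi\cdot\iota\pi=q$. Applying $v_{\mathfrak{P}}$ and using $\iota\mathfrak{P}=\mathfrak{P}$ yields
$$v_{\mathfrak{P}}(\pi)=v_{\mathfrak{P}}(\iota\pi)=\tfrac{1}{2}v_{\mathfrak{P}}(q),$$
which lies strictly between $0$ and $v_{\mathfrak{P}}(q)$.

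The second step is to translate ordinarity into a statement ruling this out. Because $X$ is ordinary of dimension $g$, the Newton polygon of its $p$-divisible group has only the slopes $0$ and $1$, each occurring $g$ times. Under the standard dictionary between roots of the characteristic polynomial $f$ of $\pi$ in $\bar{\Q}_p$ (via any fixed embedding $\bar{\Q}\hookrightarrow\bar{\Q}_p$) and primes of $\O_K$ above $p$ --- the $e(\mathfrak{Q}/p)\cdot\mathfrak{f}(\mathfrak{Q}/p)$ roots attached to a prime $\mathfrak{Q}\mid p$ all have $p$-adic valuation $v_{\mathfrak{Q}}(\pi)/e(\mathfrak{Q}/p)$ --- ordinarity becomes the assertion that $v_{\mathfrak{Q}}(\pi)\in\{0,v_{\mathfrak{Q}}(q)\}$ at every prime $\mathfrak{Q}$ of $\O_K$ above $p$. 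Equivalently, at each such $\mathfrak{Q}$ at least one of $\pi$ and $\iota\pi$ is a $\mathfrak{Q}$-adic unit.

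Specialising this to $\mathfrak{P}$ directly contradicts $v_{\mathfrak{P}}(\pi)=\tfrac{1}{2}v_{\mathfrak{P}}(q)\in(0,v_{\mathfrak{P}}(q))$, completing the argument. I expect the only real obstacle to be the Newton-polygon translation in the second paragraph; once that dictionary is in place, the symmetry forced by $\iota\mathfrak{P}=\mathfrak{P}$ yields the contradiction in a single line. If one prefers to avoid slopes entirely, the same conclusion follows from the equivalent $p$-adic characterisation of ordinarity by a factorisation $f(T)\equiv T^{g}h(T)\pmod{p}$ with $h(0)\neq 0$, already implicit in the definition of $\nu_p(f)$ in Section \ref{Sec:LocalFactorsf}: the action of $\iota$ on such a factorisation, together with a non-split $\mathfrak{P}$, would force a root of $h$ and a root of its reciprocal to come from the same prime, which is impossible.
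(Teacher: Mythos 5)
Your proof is correct, but it takes a genuinely different route from the paper's. The paper argues directly and constructively: since $\pi\bar\pi=q\in\mathfrak p^+$, at least one of $\pi,\bar\pi$ lies in any prime of $K$ above $\mathfrak p^+$, while ordinarity forces $\pi+\bar\pi\notin\mathfrak p^+$; hence the quadratic $T^2-(\pi+\bar\pi)T+q$ satisfied by $\pi$ over $K^+$ reduces to $T(T-u)$ with $u\neq 0$ modulo $\mathfrak p^+$, so it has two distinct roots in the residue field and $\mathfrak p^+$ splits. You instead argue by contradiction through valuations: a non-split prime $\mathfrak P$ is $\iota$-stable, whence $v_{\mathfrak P}(\pi)=v_{\mathfrak P}(\iota\pi)=\tfrac12 v_{\mathfrak P}(q)$, a ``slope $\tfrac12$'' that the Newton polygon of an ordinary abelian variety forbids. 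Both arguments ultimately rest on the same consequence of ordinarity --- at each prime of $\O_K$ over $p$ exactly one of $\pi$, $\bar\pi$ is a unit --- which the paper packages as the unexplained assertion that $\pi+\bar\pi\in\mathfrak p^+$ ``contradicts ordinarity,'' and which you derive from the slope-$\{0,1\}$ description of the Newton polygon. Your route costs you the Newton-polygon dictionary (which you correctly state but do not prove; it is standard), while making the role of ordinarity more transparent; the paper's is more elementary and self-contained, exhibiting the split factorization explicitly rather than ruling out the alternative. The closing alternative you sketch via $f(T)\equiv T^g h(T)\pmod p$ is essentially the same idea and is left too vague to stand on its own, but the main argument does not depend on it.
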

\begin{proof}
Recall that $K^+=\Q(\pi+\bar{\pi})$, and that $\pi,\bar\pi$ are roots of $T^2-(\pi+\bar\pi)T+q$ over $K$.  Let $\frak{p}^+$ be a prime of $K^+$ over $p$; since $\pi\bar\pi=q$, at least one of $\pi$ or $\bar\pi$ lies in $\frak{p}$, a prime of $K$ over $\frak{p}^+$.  However, if $\pi+\bar\pi \in \frak{p}^+$, then both $\pi$ and $\bar\pi$ are in $\frak{p}$.  This contradicts the assumption that $f$ was ordinary, so $\pi+\bar\pi \notin \frak{p}^+$.  Then 
\begin{align*}
T^2-(\pi+\bar\pi)T+q &\equiv T^2 - uT \bmod \frak{p}^+ \\
&\equiv T(T-u) \bmod \frak{p}^+
\end{align*}
where $u \in \bmod \frak{p}^+$ is nonzero.  Therefore, $\frak{p}^+$ splits in $K$.
\end{proof}

%
%
%

\begin{prop}\label{PropMatchGeneralCaseForP}
Suppose $f$ is as in Proposition \ref{PropMatchGeneralCaseForEll}.  Then $\nu_p(f)=\nu_p(K)$.
\end{prop}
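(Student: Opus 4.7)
The plan is to argue by case analysis on the decomposition group $D(p)$ of $p$ in $K$. By Lemma \ref{lemprimesplitsKplustoK}, every prime of $K^+$ above $p$ splits in $K$, so complex conjugation $\iota = \sigma^g$ cannot belong to $D(p)$. Combined with $I(p) = \{1\}$ (which follows from \eqref{encyclic}) and the fact that the subgroups of $\langle\sigma\rangle \cong \Z/2g\Z$ of odd order are exactly $\{1\}$ and $\langle \sigma^2\rangle$, we conclude that $D(p)\in\{\{1\},\langle\sigma^2\rangle\}$. These are precisely the first and third rows of Tables~\ref{Tab:Frob} and~\ref{Tab:Chars}, from which one immediately reads off
\[
\nu_p(K)=\begin{cases}\bigl(p/(p-1)\bigr)^g & \text{if }D(p)=\{1\},\\ p^g/(p^g-1) & \text{if }D(p)=\langle\sigma^2\rangle.\end{cases}
\]

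Next I would analyze $g_X\bmod p$ and the associated $\gamma_p$ in each case. The factorization of $g_X\bmod p$ tracks the splitting of the ``unit'' primes of $K$ above $p$: $g_X$ splits into $g$ distinct linear factors when $D(p)=\{1\}$, and $g_X$ is irreducible of degree $g$ when $D(p)=\langle\sigma^2\rangle$. Because $\gamma_p$ must preserve the canonical \'etale/toric Lagrangian decomposition of $X[p]$, it lies in (the multiplier-$m_g$ coset of) the Siegel Levi $\GL_g(\Fp)\times\Fp^\times\subset\GSp_{2g}(\Fp)$, with $\gamma_p^{\text{et}}$ regular semisimple having characteristic polynomial $g_X$. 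The centralizer of $\gamma_p$ inside this Levi factors as $\cent_{\GL_g(\Fp)}(\gamma_p^{\text{et}})\times\Fp^\times$; this is the split maximal torus of order $(p-1)^{g+1}$ in the first case, and the anisotropic torus $\F_{p^g}^\times\times\Fp^\times$ of order $(p^g-1)(p-1)$ in the second. Paralleling the orbit-stabilizer derivation in the proof of Proposition~\ref{PropMatchGeneralCaseForEll}, namely $\nu_p(f)=p^g(p-1)/\#\cent$, yields $p^g/(p-1)^g$ and $p^g/(p^g-1)$ respectively, matching $\nu_p(K)$ in both cases.

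The main obstacle is the careful treatment of the count defining $\nu_p(f)$ when $D(p)=\langle\sigma^2\rangle$. In that case $g_X$ is irreducible of odd degree $g$, so by Lemma~\ref{NoOddBetas} it cannot be self-dual for any multiplier, and thus no element of $\GSp_{2g}(\Fp)^{(m_g)}$ can have characteristic polynomial literally equal to $(g_X)^2$. The correct reading is that Frobenius acts on $X[p]^{\text{tor}}$ with characteristic polynomial $\bar{g_X}^{m_g}$ rather than $g_X$, so that the full characteristic polynomial of $\gamma_p$ is the $\alpha$-pair $g_X\cdot\bar{g_X}^{m_g}$ of shape $[g]_1[g]_2$ from Table~\ref{Tab:RegSSCharPolShape}; the semisimplicity hypothesis, together with preservation of the \'etale/toric decomposition, confines the matrix count to the Siegel Levi, after which the computation reduces to a $\GL_g(\Fp)$ centralizer calculation that mirrors the proof of Proposition~\ref{PropMatchGeneralCaseForEll}. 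Making this reduction precise, and checking it is consistent with the normalization $\#\GSp_{2g}(\Fp)^{(m_g)}/p^g$ in the definition of $\nu_p(f)$, is the crux of the argument.
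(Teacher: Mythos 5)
Your reduction to two cases for the splitting of $p$ is exactly the paper's argument: the published proof likewise combines Lemma~\ref{lemprimesplitsKplustoK} with the unramifiedness assumed in \eqref{encyclic} to conclude that either $p$ splits completely or $p\O_K=\mathfrak{p}_1\mathfrak{p}_2$, and then reads $\nu_p(K)$ off the first and third rows of Table~\ref{Tab:Match}. Where you diverge is on the matrix-count side. The paper disposes of each case in one sentence, asserting that the set of semisimple elements of $\GSp_{2g}(\F_p)^{(m_g)}$ with characteristic polynomial $(g_X)^2$ ``has the same cardinality as'' the class $[1]_1\cdots[1]_{2g}$ (resp.\ $[g]_1[g]_2$); you instead pass to the Siegel Levi and compute a centralizer there. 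Your observation about the inert case is genuinely valuable and goes beyond the paper: by Lemma~\ref{NoOddBetas} an irreducible $g_X$ of odd degree $g$ is never self-dual for any multiplier, so no element of $\GSp_{2g}(\F_p)$ has characteristic polynomial literally $(g_X)^2$, and the count in \eqref{definenupf} only makes sense after replacing $(g_X)^2$ by $g_X\cdot\bar{g_X}^{m_g}$ (shape $[g]_1[g]_2$), an identification the paper makes silently.

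The gap is the step you yourself flag as the crux, and it is a real one. Definition \eqref{definenupf} counts semisimple elements of the full group $\GSp_{2g}(\F_p)^{(m_g)}$ and normalizes by $\#\GSp_{2g}(\F_p)^{(m_g)}/p^g$, so the orbit--stabilizer identity $\nu_p(f)=p^g(p-1)/\#\cent$ is only valid with $\#\cent$ the centralizer in $\GSp_{2g}(\F_p)$, not in the Levi. In the split case the relevant semisimple element has every eigenvalue with multiplicity two, so its centralizer in the full group is built from $\GL_2$-blocks on the doubled eigenspaces and is much larger than the split torus of order $(p-1)^{g+1}$; plugging the Levi centralizer into the full-group orbit--stabilizer formula therefore does not compute the count appearing in \eqref{definenupf}. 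What you are actually computing is a count of elements of the Levi normalized by the Levi order, which does produce $p^g/(p-1)^g$ and $p^g/(p^g-1)$ --- and this is surely the intended meaning, since it is the only reading under which the stated values emerge --- but the passage from the literal definition to that Levi computation is never justified, by you or by the paper. So your case analysis and target values agree with the paper's, and you have correctly located the soft spot in the argument, but the reduction you defer is not a routine verification: it is the entire content of the proposition on the $\nu_p(f)$ side.
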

\begin{proof}
We assumed in \eqref{encyclic} that $p$ is unramified in $K$ and $\Gal(K/\Q)=\Z/2g\Z$, and by Lemma \ref{lemprimesplitsKplustoK} all primes of $K^+$ over $p$ split in $K$.  Then the only possible factorizations of $p$ in $\O_K$ are that $p$ splits completely, or that $p\O_K=\mathfrak{p}_1 \mathfrak{p}_2$.

If $p$ splits completely, then $g_X(T)$ (from \eqref{definenupf}) factors as a product of linear polynomials over $\Fp$ and the set of \emph{semisimple} matrices with characteristic polynomial $(g_X(T))^2$ has the same cardinality as $[1]_1\dots [1]_{2g}$.  Then $\nu_p(f)=\nu_p(K)$ by the first row of Table \ref{Tab:Match}.

If instead $p\O_K= \mathfrak{p}_1\mathfrak{p}_2$ (so $p$ is inert in $\O_{K^+}$), then $g_X(T)$ is irreducible so the set of \emph{semisimple} matrices with characteristic polynomial $(g_X(T))^2$ has the same cardinality as $[g]_1[g]_2$.  Then $\nu_p(f)=\nu_p(K)$ by the third row of Table \ref{Tab:Match}.
\end{proof}

The following proposition is true for any value of $g$.

\begin{prop}
Let $f$ be a $q$-Weil polynomial of degree $2g$ with splitting field $K$ and $\O_K=\Z[\pi,\bar\pi]$. Then
$$\nu_{\infty}(f) = \frac{1}{(2\pi)^g}\sqrt{\abs{\frac{\Delta_K}{\Delta_{K^+}}}}$$
\end{prop}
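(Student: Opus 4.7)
The plan is to unwind the definition of $\nu_\infty(f)$ given in equation \eqref{nuinfinity} and replace the polynomial discriminants $\disc(f)$ and $\disc(f^+)$ with the field discriminants $\Delta_K$ and $\Delta_{K^+}$, absorbing the resulting factor of $\cond(f)$ against the $1/\cond(f)$ in the definition.

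First I would recall from the discussion preceding Lemma~\ref{lemconductor} the identity $\disc(f) = \cond(f)^2 \, \Delta_{\O_f}$ (the usual conductor-discriminant relation for the order $\Z[\pi_f] \subseteq \O_f$). By assumption \eqref{enmax} we have $\O_f = \O_{K_f} = \O_K$, so $\Delta_{\O_f} = \Delta_K$. Hence $\disc(f) = \cond(f)^2 \, \Delta_K$. For the totally real side, Lemma~\ref{lemmaximalrealorder} gives $\Z[T]/f^+(T) = \O_{K^+}$, and since $f^+$ is the minimal polynomial of a generator of this maximal order, $\disc(f^+) = \Delta_{K^+}$.

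Substituting these two identifications into \eqref{nuinfinity} gives
\begin{equation*}
\nu_\infty(f) = \frac{1}{\cond(f)(2\pi)^g}\sqrt{\abs{\frac{\cond(f)^2 \, \Delta_K}{\Delta_{K^+}}}} = \frac{1}{\cond(f)(2\pi)^g} \cdot \cond(f)\sqrt{\abs{\frac{\Delta_K}{\Delta_{K^+}}}},
\end{equation*}
where the factor $\cond(f)$ comes out of the square root as $\abs{\cond(f)} = \cond(f) > 0$. The two copies of $\cond(f)$ cancel, yielding the claimed formula.

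There is essentially no obstacle here beyond bookkeeping; the only subtlety to verify is that the sign conventions make the absolute value work cleanly, which is immediate because $\cond(f)^2$ is a positive integer and can be pulled out of $\abs{\cdot}$ as $\cond(f)$. Note also that this argument does not actually use $g$ being an odd prime: it relies only on assumptions \eqref{enord}--\eqref{enmax} together with Lemma~\ref{lemmaximalrealorder}, which is consistent with the stated generality of the proposition.
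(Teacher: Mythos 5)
Your argument is correct and is essentially identical to the paper's own proof, which likewise invokes $\disc(f) = \cond(f)^2\Delta_K$ and $\disc(f^+) = \Delta_{K^+}$ and cancels the conductor against the $1/\cond(f)$ in the definition \eqref{nuinfinity}. Your write-up just spells out the bookkeeping (and the positivity of $\cond(f)$) more explicitly than the paper does.
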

\begin{proof}
This follows from the fact that $\disc(f) = \cond(f)^2\Delta_K$ and $\disc(f^+) = \Delta_{K^+}$.
\end{proof}

\subsection{Specific case ($g=3$)}

In the particular case when $g=3$, we can also prove that $\nu_\ell(f)$ and $\nu_\ell(K)$ match in the case when $f\bmod\ell = f_\cclass$ for a non-semisimple conjugacy class $\cclass$.  We begin by giving the orders of the centralizers for the non-semisimple conjugacy classes in $\GSp_6(\Fell)$.

\begin{prop}\label{NonSSCent} Let $\mathcal{C}$ be one of the non-semisimple conjugacy classes of matrices in $\GSp_6(\Fell)$ with characteristic polynomial in one of the shapes listed in Table \ref{Tab:NonSSCharPolShape}.  Then we have 

$$\#\cent_{\GSp_{6}(\Fell)}(\cclass) = \begin{cases}
\ell^3(\ell - 1) & \text{ if $f_{\cclass}(T)$ has shape $[1]^6$}, \\
\ell^2(\ell - 1)^2 & \text{ if $f_{\cclass}(T)$ has shape $[1]_1^3[1]_2^3$}, \\
\ell^2(\ell^2 - 1) & \text{ if $f_{\cclass}(T)$ has shape $[2]^3$}. \\
\end{cases}$$
\end{prop}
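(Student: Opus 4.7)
The plan is to follow the recipe sketched in the paragraph just before the proposition: for each of the three non-semisimple shapes, construct an explicit cyclic representative $\gamma\in\GSp_6(\Fell)$, use cyclicity to parameterize the $\GL_6(\Fell)$-centralizer of $\gamma$, and then cut down to $\GSp_6(\Fell)$ by imposing the symplectic multiplier condition.

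First I would exhibit representatives. For shape $[1]^6$, with multiplier a square $m=a^2$, take $\gamma=aI_6+N$ where $N$ is a specific rank-$5$ nilpotent matrix chosen so that $\gamma J\gamma^T=mJ$ (equivalently, $a(NJ+JN^T)+NJN^T=0$) and $\gamma$ is cyclic. For $[1]_1^3[1]_2^3$, take $\gamma=\diag(J_3(a),J_3(b))$ in block form with $ab=m$, where $J_3(x)$ denotes a $3\times 3$ upper-triangular Jordan block with eigenvalue $x$, arranged so that $J$ pairs the two Jordan blocks against each other. For $[2]^3$, take $\gamma$ built from three coupled copies of the companion matrix of a $\beta$-polynomial $p(T)=T^2+cT+m$ (irreducible over $\Fell$, so necessarily with constant term $m$), arranged as a single cyclic block of size six against $J$. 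In each case a direct computation verifies that $\gamma$ is cyclic with the correct factorization shape and lies in $\GSp_6(\Fell)$ with the prescribed multiplier.

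Next, cyclicity gives a canonical identification of the $\GL_6(\Fell)$-centralizer of $\gamma$ with $R^\times$, where $R:=\Fell[T]/f_\cclass(T)$; the total orders are $\ell^5(\ell-1)$, $\ell^4(\ell-1)^2$, and $\ell^4(\ell^2-1)$ in the three cases. The symplectic condition can then be encoded through the ring involution $\iota$ on $R$ determined by $\iota(\gamma)=m\gamma^{-1}$, which is well-defined because $f_\cclass(0)\neq 0$. The relation $\gamma J\gamma^T=mJ$ forces $p(\gamma)^T=J^{-1}\iota(p)(\gamma)J$ for every polynomial $p$, so $C=p(\gamma)\in\GSp_6$ if and only if the norm $N(C):=C\cdot\iota(C)$ lies in $\Fell^\times\subseteq R^\times$, giving
$$\#\cent_{\GSp_6(\Fell)}(\cclass)=\#\{C\in R^\times:N(C)\in\Fell^\times\}.$$

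The hard part will be computing $\#N^{-1}(\Fell^\times)$ in each case. I would identify $R$ with a concrete local $\Fell$-algebra: $\Fell[\epsilon]/\epsilon^6$ for $[1]^6$; the CRT product $\Fell[\epsilon]/\epsilon^3\times\Fell[\delta]/\delta^3$ for $[1]_1^3[1]_2^3$, where $\iota$ must swap the two factors since $\iota(a+\epsilon)$ has constant term $b$; and $\F_{\ell^2}[\epsilon]/\epsilon^3$ for $[2]^3$, where $\iota$ restricts to the nontrivial Galois automorphism of $\F_{\ell^2}/\Fell$. In each case one checks that the fixed subring $R^\iota$ is local of order $\ell^3$ with residue field $\Fell$, so $\#(R^\iota)^\times=\ell^2(\ell-1)$; that the norm $N:R^\times\to(R^\iota)^\times$ is surjective, so $\#\ker N=\#R^\times/\#(R^\iota)^\times$; and then $\#N^{-1}(\Fell^\times)=(\ell-1)\cdot\#\ker N$, yielding $\ell^3(\ell-1)$, $\ell^2(\ell-1)^2$, and $\ell^2(\ell^2-1)$ respectively. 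The most delicate verification will be the structure of $R^\iota$ and the surjectivity of $N$ in the $[1]^6$ case, where the expansion $\iota(a+\epsilon)=a(1+\epsilon/a)^{-1}$ produces a nontrivial involution on $\Fell[\epsilon]/\epsilon^6$ that has to be analyzed by direct expansion up to order $\epsilon^5$.
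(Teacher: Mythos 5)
Your strategy is genuinely different from the paper's: the paper constructs an explicit representative $\gamma$ and an explicit generic centralizing matrix $C$, then counts free parameters entry by entry (it carries this out only for the shape $[1]_1^3[1]_2^3$ and asserts the other two cases are similar). Your reduction to the algebra $R=\Fell[T]/f_\cclass(T)$, with the symplectic condition encoded as $C\,\iota(C)\in\Fell^\times$ for the involution $\iota(\gamma)=m\gamma^{-1}$, is a cleaner and more uniform route, and the identifications of $R$, of $\#R^\times$, and of $R^\iota$ in all three cases are correct. The $[1]_1^3[1]_2^3$ and $[2]^3$ counts go through exactly as you describe.

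There is, however, a genuine error in the $[1]^6$ case: the norm $N\colon R^\times\to(R^\iota)^\times$ is \emph{not} surjective there when $\ell$ is odd. Writing $R=\Fell[\epsilon]/\epsilon^6$ with $\gamma=s+\epsilon$, $s^2=m$, the involution $\iota$ induces the \emph{identity} on the residue field $\Fell$ (since $\iota(\gamma)\equiv s \bmod \epsilon$), so $N(C)\equiv \bar{C}^2 \bmod \mathfrak{m}$ and the image of $N$ reduces into $(\Fell^\times)^2$; the image has index $2$ in $(R^\iota)^\times$. Consequently $\#\ker N=2\ell^3$, not $\ell^3$, and your formula $\#N^{-1}(\Fell^\times)=(\ell-1)\cdot\#\ker N$ does not apply as stated. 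The correct count is
\begin{equation*}
\#N^{-1}(\Fell^\times)=\#\ker N\cdot\#\bigl(\Fell^\times\cap \operatorname{im}N\bigr)=2\ell^3\cdot\tfrac{\ell-1}{2}=\ell^3(\ell-1),
\end{equation*}
so the final answer is unchanged, but only because the two factors of $2$ cancel; as written, your argument computes the right number from two wrong intermediate quantities. (This issue does not arise for $[1]_1^3[1]_2^3$, where $\iota$ swaps the two local factors, nor for $[2]^3$, where $\iota$ induces the nontrivial automorphism of $\F_{\ell^2}$ and the residue-field norm is surjective.) Replace the surjectivity claim in the $[1]^6$ case with the fiber count above, and the proof is complete.
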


\begin{proof}
Refer back to the end of section~\ref{Sec:cclasses} for an outline of the general method for determining centralizer orders for non-semisimple classes. We demonstrate this method for $g = 3$ in the case where $f_\cclass(T)$ has the shape $[1]_1^3[1]_2^3.$

Suppose $f_\cclass(T) = (T - a)^3(T - b)^3$ with $a,b \in \Fq^{\times}.$ Since these are $\alpha$ polynomials (by Table~\ref{Tab:NonSSCharPolShape}), $m = ab$. A representative of the class $\cclass$ is 
$$\gamma = \begin{bmatrix}
a & -a & 0 & 0 & 0 & 0 \\
 0 & a & -a & 0 & 0 & 0 \\
 0 & 0 & a & 0 & 0 & 0 \\
 0 & 0 & 0 & b & b & b \\
 0 & 0 & 0 & 0 & b & b\\
  0 & 0 & 0 & 0 & 0 & b\\
\end{bmatrix},$$
where we verify that $\charpol(\gamma)=f_\cclass(T)$, $\gamma\in\GSp_6(\Fell)$ with multiplier $ab$, and $\minpol(\gamma)=\charpol(\gamma)$ (so that $\gamma$ is cyclic).  Then a generic element of $\cent_{\GSp_{6}(\Fell)}(\cclass)$ has the form
$$C = \begin{bmatrix}
c_1 & y_1 & y_2 & 0 & 0 & 0 \\
 0 & c_1 & y_1 & 0 & 0 & 0 \\
 0 & 0 & c_1 & 0 & 0 & 0 \\
 0 & 0 & 0 & c_2 & c_3 & c_4 \\
 0 & 0 & 0 & 0 & c_2 & c_3\\
  0 & 0 & 0 & 0 & 0 & c_2\\
\end{bmatrix}$$
where  $$y_1 = -\frac{c_1c_3}{c_2} \hspace{5mm} \text{and} \hspace{5mm}  y_2 = c_1\left(\frac{c_3^2 - c_2c_4}{c_2^2}\right).$$
Since $\det C=c_1^3 c_2^3$, it must be that $c_1,c_2\in\Fell^\times$, while $c_3,c_4\in\Fell$.  Then $$\#\cent_{\GSp_{6}(\Fell)}(\cclass) = \ell^2(\ell-1)^2.$$

The other cases require similar computations.
\end{proof}

\begin{prop}\label{PropMatchSpecificCaseForEll}
Suppose $f$ is a $q$-Weil polynomial of degree $6$ such that $f\bmod\ell=f_\cclass$ for one of the conjugacy classes in Table \ref{Tab:NonSSCharPolShape}.  If $\ell\neq p$ then $\nu_\ell(f)=\nu_\ell(K)$.
\end{prop}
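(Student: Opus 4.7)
The proof will closely parallel that of Proposition~\ref{PropMatchGeneralCaseForEll}, but with two adjustments. First, since $f\bmod\ell = f_\cclass$ is now non-squarefree, I would use the general definition \eqref{redefinenuellf} of $\nu_\ell(f)$; the point is that for each of the three shapes in Table~\ref{Tab:NonSSCharPolShape}, the cyclic matrices with a fixed non-semisimple characteristic polynomial (and the fixed multiplier $q$) form a single $\GSp_6(\Fell)$-conjugacy class $\cclass$, whose semisimplification is $\gamma_\ell$. Second, I would invoke the non-semisimple centralizer orders supplied by Proposition~\ref{NonSSCent} in place of Lemma~\ref{RegCent}.

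Using the orbit-stabilizer theorem together with $\#\GSp_{2g}(\Fell)^{(q)} = \#\GSp_{2g}(\Fell)/(\ell-1)$, the same manipulation used in Proposition~\ref{PropMatchGeneralCaseForEll} gives
\begin{equation*}
\nu_\ell(f) = \frac{(\ell-1)\ell^g}{\#\cent_{\GSp_{2g}(\Fell)}(\cclass)}.
\end{equation*}
Plugging in the three centralizer orders from Proposition~\ref{NonSSCent} for $g=3$ yields $\nu_\ell(f) = 1$ for shape $[1]^6$, $\nu_\ell(f) = \ell/(\ell-1)$ for shape $[1]_1^3[1]_2^3$, and $\nu_\ell(f) = \ell/(\ell+1)$ for shape $[2]^3$.

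For the other side, I would exploit a uniform observation: Table~\ref{Tab:Frob} shows that in every non-semisimple row, $I(\ell) \supseteq \langle\sigma^2\rangle \neq \{1\}$, so $\ell$ ramifies in $K$. Because the characters $\chi$ and $\chi^5$ have order $6$ (their kernels being trivial when $g=3$), we have $K^\chi = K^{\chi^5} = K$, forcing $\chi(\ell) = \chi^5(\ell) = 0$. Thus the product in \eqref{Eqn:definenuellK} collapses to the single factor indexed by $\chi^3$, namely
\begin{equation*}
\nu_\ell(K) = \frac{\ell}{\ell - \chi^3(\ell)}.
\end{equation*}
Reading the value of $\chi^3(\ell)$ off Table~\ref{Tab:Chars} in each of the three rows gives $\chi^3(\ell) = 0, 1, -1$ for shapes $[1]^6, [1]_1^3[1]_2^3, [2]^3$ respectively, and thus $\nu_\ell(K) = 1, \ell/(\ell-1), \ell/(\ell+1)$ in those three cases, matching the previously computed $\nu_\ell(f)$.

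The main obstacle is not really an analytic one but a bookkeeping check that the semisimplification in each case matches the correct row of Table~\ref{Tab:Frob} (so that Table~\ref{Tab:Chars} may legitimately be invoked); this reduces to the remark at the end of Section~\ref{Sec:cclasses} that each shape in Table~\ref{Tab:NonSSCharPolShape} gives a unique $\GSp_6(\Fell)$-conjugacy class. Once that is noted, all three cases reduce to a direct tabular comparison, exactly as in the regular semisimple argument but with the non-semisimple centralizer data.
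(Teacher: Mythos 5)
Your proof is correct and follows essentially the same route as the paper's: orbit--stabilizer with the non-semisimple centralizer orders of Proposition~\ref{NonSSCent} on one side, and the character values of Table~\ref{Tab:Chars} feeding into \eqref{Eqn:definenuellK} on the other. One small point in your favor: for the shape $[1]^6$ your value $\nu_\ell(K)=1$ (forced by $\chi(\ell)=\chi^3(\ell)=\chi^5(\ell)=0$) is the correct one, and the entry $\left(\frac{\ell}{\ell-1}\right)^g$ in the last column of the first row of the paper's Table~\ref{Tab:NonSSMatch} appears to be a typographical carry-over from Table~\ref{Tab:Match}; the intended value there is $1$, so the columns do match as claimed.
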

\begin{proof}
This proof is identical to the proof of Proposition~\ref{PropMatchGeneralCaseForEll}, but for non-semisimple classes. The second column of Table~\ref{Tab:NonSSMatch} comes from Proposition~\ref{NonSSCent}, which we use to compute the third column.

The fourth column comes from Table~\ref{Tab:Chars}, which we use to compute the fifth column. Seeing that the third and fifth columns match, we are done.
\end{proof}

\begin{center}
\begin{table}[h]
\begin{tabular}{*5c}    \toprule
Class Shape & $\#\cent(\cclass)$ & $\nu_{\ell}(f)$  &  $\{\chi(\ell),\chi^g(\ell)\}$ & $\nu_\ell(K)$  \\  \midrule
$[1]^6$ & $\ell^3(\ell - 1)$ & 1 & $\{0,0\}$ &  $\left(\frac{\ell}{\ell - 1}\right)^g$ \\
$[1]_1^3[1]_2^3$ & $\ell^2(\ell-1)^{2}$ & $\frac{\ell}{\ell - 1}$ & $\{0,1\}$ & $\frac{\ell}{\ell - 1}$  \\  
$[2]^3$ & $\ell^2(\ell^2-1)$ & $\frac{\ell}{\ell + 1}$ & $\{0,-1\}$ &  $\frac{\ell}{\ell + 1}$   \\  \bottomrule
\end{tabular}
\hspace{2mm}
\caption{Evaluating $\nu_{\ell}(f)$ and $\nu_\ell(K)$ for non-semisimple classes when $g=3$.}
\label{Tab:NonSSMatch}
\end{table}
\end{center}

\section{Main results}

This section generalizes many of the results of \cite[Section 7]{achterwilliams2015}.  We define an infinite product of numbers $\{a_\ell\}$ indexed by finite primes by 
\begin{equation*}
\prod_{\ell} a_\ell = \lim_{B \rightarrow \infty} \prod_{\ell < B}a _\ell
\end{equation*}
so that $\prod_\ell a_\ell \prod_\ell b_\ell = \prod_\ell (a_\ell b_\ell)$.

For a number field $L$, let $h_L$, $\omega_L$, and $R_L$ denote the class number, number of roots of unity, and regulator of $L$ respectively.

\begin{prop}\label{PropClassNumberRatio}
Let $f$ be a degree $2g$ $q$-Weil polynomial that is ordinary, principally polarizable, cyclic, and maximal. Let $K$ be the splitting field of $f$ over $\Q$ and let $K^+$ be its maximal totally real subfield. Then
\begin{equation}\label{Eq:classnumberratio}
\frac{h_K}{h_{K^+}} = \omega_K \nu_\infty(f) \prod_\ell \nu_\ell(K).
\end{equation}
\end{prop}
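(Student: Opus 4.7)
The plan is to apply the analytic class number formula to $\zeta_K(s)$ and $\zeta_{K^+}(s)$, take their ratio at $s=1$, and match the two sides of \eqref{Eq:classnumberratio}.  Condition \eqref{encyclic} makes $K/\Q$ abelian, so both zeta functions factor through Artin $L$-functions,
$$\zeta_K(s)=\prod_{\chi\in X(K)}L(s,\chi),\qquad \zeta_{K^+}(s)=\prod_{\chi\in X(K^+)}L(s,\chi),$$
and dividing yields $\zeta_K(s)/\zeta_{K^+}(s)=\prod_{\chi\in S(K)}L(s,\chi)$, which is entire at $s=1$ since each $\chi\in S(K)$ is nontrivial.

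Next I would compute the ratio of residues at $s=1$ via the analytic class number formula: $K$ is totally imaginary of degree $2g$ and $K^+$ is totally real of degree $g$ with $\omega_{K^+}=2$, so the two residues are $(2\pi)^gh_KR_K/(\omega_K\sqrt{|\Delta_K|})$ and $2^{g-1}h_{K^+}R_{K^+}/\sqrt{|\Delta_{K^+}|}$.  Invoking the classical CM regulator identity $R_K/R_{K^+}=2^{g-1}/Q$, where $Q=[\O_K^\times:W_K\O_{K^+}^\times]\in\{1,2\}$ is the Hasse unit index, the ratio simplifies to
$$\frac{(2\pi)^g}{Q\,\omega_K}\cdot\frac{h_K}{h_{K^+}}\cdot\sqrt{\left|\frac{\Delta_{K^+}}{\Delta_K}\right|}.$$

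I would then evaluate the $L$-side by expanding each $L(1,\chi)=\prod_\ell(1-\chi(\ell)/\ell)^{-1}$ as an Euler product (using the convention $\chi(\ell)=0$ at primes ramified in $K^\chi$ built into Section~\ref{Sec:LocalFactorsK}) and interchanging the two products to obtain $\prod_{\chi\in S(K)}L(1,\chi)=\prod_\ell\nu_\ell(K)$.  Equating the two expressions for the ratio of residues and substituting $\nu_\infty(f)=\sqrt{|\Delta_K/\Delta_{K^+}|}/(2\pi)^g$ from the preceding proposition rearranges to
$$\frac{h_K}{h_{K^+}}=Q\,\omega_K\,\nu_\infty(f)\prod_\ell\nu_\ell(K).$$

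The main obstacle is then to verify $Q=1$ under our hypotheses so that the $Q$ drops from the formula.  This should follow from condition \eqref{encyclic}: when $\Gal(K/\Q)\cong\Z/2g\Z$ is cyclic and $g$ is odd, a short unit-group argument shows that any $u\in\O_K^\times$ with $u\bar u$ a root of unity already lies in $W_K\cdot\O_{K^+}^\times$, forcing the Hasse index to collapse to $1$.  Once $Q=1$ is established, \eqref{Eq:classnumberratio} follows.
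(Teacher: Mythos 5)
Your proposal is correct and follows essentially the same route as the paper: the analytic class number formula, the factorization of the zeta ratio into $\prod_{\chi\in S(K)}L(1,\chi)=\prod_\ell\nu_\ell(K)$ via Euler products, the regulator identity $R_K=2^{g-1}R_{K^+}/Q$, and $\omega_{K^+}=2$. The one point to tighten is the Hasse unit index: the paper handles $Q=1$ by citing Furuya's theorem for cyclic CM fields rather than by a short self-contained unit-group argument, and since $Q=1$ genuinely depends on cyclicity (it fails, for instance, for cyclotomic fields of composite conductor), you should cite or actually carry out that step rather than leave it as a sketch.
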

\begin{proof}
By the analytic class number formula, the ratio of class numbers on the left side of \eqref{Eq:classnumberratio} is $$\frac{h_K}{h_{K^+}} = \lim_{s \rightarrow 1} \frac{(s-1)\zeta_K(s)}{(s-1)\zeta_{K^+}(s)}\frac{\sqrt{\abs{\Delta_K}}2^g\omega_K R_{K^+}}{\sqrt{\abs{\Delta_{K^+}}}(2\pi)^g \omega_{K^+} R_{K}}.$$
For a finite abelian extension $L/\Q$, we have $$\lim_{s \rightarrow 1}(s-1)\zeta_L(s) = \prod_{\chi \in \Gal(L/\Q)^*\backslash \text{id}} L(1, \chi)$$ where we interpret the Dirichlet $L$-function as the conditionally convergent Euler product
$$L(1, \chi) = \lim_{B \rightarrow \infty} \prod_{\ell < B} \frac{1}{1 - \chi(\ell)/\ell}.$$

We then see that 
\begin{align*}
\lim_{s \rightarrow 1}\frac{ (s - 1)\zeta_K(s)}{(s - 1)\zeta_{K^+}(s)} &= \frac{\prod_{\chi \in \Gal(K/\Q)^*\backslash \text{id}}\left(\prod_{\ell} \frac{1}{1 - \chi(\ell)/\ell}\right)}{\prod_{\chi \in \Gal(K^+/\Q)^*\backslash \text{id}}\left(\prod_{\ell} \frac{1}{1 - \chi(\ell)/\ell}\right)}\\
&= \prod_{\chi \in S(K)} \prod_{\ell}\frac{1}{1 - \chi(\ell)/\ell}\\
&= \prod_{\ell} \nu_{\ell}(K)
\end{align*}
where $S(K)$ is as in Section~\ref{Sec:LocalFactorsK}.

By \cite[Proposition 4.16]{Wash97} $R_K = \frac{1}{Q}2^{g-1}R_{K^+}$, where $Q$ is Hasse's unit index. Since $K/\Q$ is cyclic, $Q=1$ by {\cite[Theorem 3]{furuya_1977}}. Therefore, $R_K=2^{g-1} R_{K^+}$.

Lastly, $\omega_{K^+} = 2$ since $K^+$ is totally real, so
\begin{align*}
\frac{h_K}{h_{K^+}} &= \sqrt{\abs{\frac{\Delta_K}{\Delta_{K^+}}}}\frac{\omega_K R_{K^+}}{2\pi^g R_K}\prod_{\ell}\nu_{\ell}(K) \\
&= \sqrt{\abs{\frac{\Delta_K}{\Delta_{K^+}}}}\frac{\omega_K}{(2\pi)^g}\prod_{\ell}\nu_{\ell}(K) \\
&= {\omega_K} \nu_\infty(f) \prod_{\ell}\nu_{\ell}(K).
\end{align*}
\end{proof}

\begin{rem}\label{RemarkNonSSWillMatch}
Propositions \ref{PropMatchGeneralCaseForEll} and \ref{PropMatchGeneralCaseForP} tell us that in many cases the $\nu_\ell(K)$ in the previous proposition is in fact equal to $\nu_\ell(f)$.  While we did not give the orders of the centralizers of the non-semisimple conjugacy classes for all odd prime $g$ in this paper, we have partial progress which suggests that Proposition \ref{PropMatchSpecificCaseForEll} will also generalize to all odd prime $g$.  On the assumption that Proposition \ref{PropMatchSpecificCaseForEll} in fact generalizes to all odd prime $g$, we make the following conjecture.
\end{rem}

\begin{conj}\label{ConjMainThmGeneral}
Let $g$ be an odd prime, and let $f$ be a degree $2g$ $q$-Weil polynomial that is ordinary, principally polarizable, cyclic, and maximal. Let $K$ be the splitting field of $f$ over $\Q$ and let $K^+$ be its maximal totally real subfield. Then
\begin{equation}
\nu_\infty(f)\prod_{\ell}\nu_\ell(f) = \frac{1}{\omega_K}\frac{h_{K}}{h_{K^+}}.
\end{equation}
\end{conj}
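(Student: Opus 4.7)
The plan is to reduce the conjecture to Proposition~\ref{PropClassNumberRatio} together with a prime-by-prime identification $\nu_\ell(f) = \nu_\ell(K)$ for every rational prime $\ell$. Indeed, Proposition~\ref{PropClassNumberRatio} already gives
$$\frac{h_K}{h_{K^+}} = \omega_K\, \nu_\infty(f) \prod_\ell \nu_\ell(K),$$
so once one knows that $\nu_\ell(f) = \nu_\ell(K)$ for all $\ell$, the conjecture follows immediately by dividing both sides by $\omega_K$ and substituting $\nu_\ell(f)$ for $\nu_\ell(K)$ inside the Euler product. Note in particular that no new analytic input is required; everything reduces to the local matching problem.

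Several of the needed matchings are already in hand. For each $\ell \neq p$ at which $\gamma_\ell$ is regular semisimple (equivalently, where the class shape appears in Table~\ref{Tab:RegSSCharPolShape}), Proposition~\ref{PropMatchGeneralCaseForEll} gives $\nu_\ell(f) = \nu_\ell(K)$ in full generality for odd prime $g$. At $\ell = p$, Lemma~\ref{lemprimesplitsKplustoK} restricts the factorization of $p$ to one of two possibilities, both of which are handled in Proposition~\ref{PropMatchGeneralCaseForP}. The only remaining primes are those $\ell \neq p$ for which $\gamma_\ell$ is non-semisimple, which by Table~\ref{Tab:Frob} are exactly the primes $\ell$ that ramify in $K$, corresponding to the three class shapes $[1]^{2g}$, $[1]_1^g[1]_2^g$, and $[2]^g$ of Table~\ref{Tab:NonSSCharPolShape}.

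For these three shapes, I would mimic the method of Proposition~\ref{NonSSCent}: construct an explicit cyclic matrix $\gamma \in \GSp_{2g}(\F_\ell)$ realizing each shape, then parametrize its centralizer and count. The natural generalizations of the $g=3$ representatives are block-triangular matrices with a single size-$g$ Jordan block of eigenvalue $a$ paired with the symplectically dual size-$g$ block of eigenvalue $b$ (where $ab = m$) for shape $[1]_1^g[1]_2^g$; the obvious analogue built from a $2 \times 2$ companion matrix in place of the scalar block for $[2]^g$; and a single size-$2g$ unipotent-type block for $[1]^{2g}$. Since $\gamma$ is cyclic, its centralizer in $\Mat_{2g}(\F_\ell)$ is the algebra $\F_\ell[\gamma]$ of dimension $2g$, and intersecting with $\GSp_{2g}$ imposes a single scalar-multiplier equation. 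This should yield predicted centralizer orders
$$\ell^g(\ell-1),\qquad \ell^{g-1}(\ell-1)^2,\qquad \ell^{g-1}(\ell^2-1),$$
for the three shapes respectively, which via $\nu_\ell(f) = \ell^g(\ell-1)/\#\cent_{\GSp_{2g}(\F_\ell)}(\cclass)$ give $1,\ \ell/(\ell-1),\ \ell/(\ell+1)$, exactly matching the $\nu_\ell(K)$ values read off the last three rows of Table~\ref{Tab:Chars}.

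The main obstacle is the explicit matrix computation verifying both that each proposed $\gamma$ lies in $\GSp_{2g}(\F_\ell)$ with the correct characteristic polynomial, multiplier, and minimal polynomial, and that a generic element of its centralizer has the structure needed to yield the claimed count. The $g = 3$ case (handled in Proposition~\ref{NonSSCent}) shows that the constraints cutting out the centralizer from $\F_\ell[\gamma]$ become increasingly intricate as $g$ grows, because preserving the symplectic form forces back-substitution identities among the entries of a polynomial in $\gamma$; packaging these uniformly in $g$ is the real work. Once that matrix computation is carried through, assembling the three ingredients (Proposition~\ref{PropMatchGeneralCaseForEll} for regular semisimple $\ell \neq p$, Proposition~\ref{PropMatchGeneralCaseForP} for $\ell = p$, and the generalized Proposition~\ref{PropMatchSpecificCaseForEll} for the ramified primes) produces $\prod_\ell \nu_\ell(f) = \prod_\ell \nu_\ell(K)$, and Proposition~\ref{PropClassNumberRatio} then yields Conjecture~\ref{ConjMainThmGeneral}.
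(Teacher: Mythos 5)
Your proposal follows exactly the paper's own (conditional) argument: regular semisimple primes via Proposition~\ref{PropMatchGeneralCaseForEll}, the prime $p$ via Proposition~\ref{PropMatchGeneralCaseForP}, and Proposition~\ref{PropClassNumberRatio} for the class number ratio, with the non-semisimple matching at the ramified primes left as the unproven ingredient --- which is precisely why the statement is a conjecture rather than a theorem. Your predicted centralizer orders agree with the $g=3$ values of Proposition~\ref{NonSSCent} and would close the gap if verified, but be aware that the symplectic condition cuts the $2g$-dimensional algebra $\F_\ell[\gamma]$ down by $g-1$ dimensions (as the $g=3$ computation already shows), not by ``a single scalar-multiplier equation,'' so the remaining work is exactly the uniform-in-$g$ matrix computation the paper also defers.
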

\begin{proof}
For any primes $\ell$ where $f_\cclass\cong f \bmod\ell$ has one of the regular semisimple shapes in Table \ref{Tab:RegSSCharPolShape}, Propositions \ref{PropMatchGeneralCaseForEll} and \ref{PropMatchGeneralCaseForP} give that $\nu_\ell(f)=\nu_\ell(K)$.  

Assume that $\nu_\ell(f)=\nu_\ell(K)$ for primes $\ell$ where $f_\cclass\cong f \bmod\ell$ is non-semisimple for all odd prime $g$. (That is, assume a version of Proposition \ref{PropMatchSpecificCaseForEll}  is true for all odd prime $g$.) Then for all primes $\ell$ we have $\nu_\ell(f)=\nu_\ell(K)$, and so by Proposition \ref{PropClassNumberRatio} the conjecture would be true.
\end{proof}

In the case where $g=3$, we can say more.

\begin{thm}\label{Thm:Main}
Let $f$ be a degree $6$ $q$-Weil polynomial that is ordinary, principally polarizable, cyclic, and maximal. Let $K$ be the splitting field of $f$ over $\Q$ and let $K^+$ be its maximal totally real subfield. Then 
\begin{equation*}\label{Eq:Main}
\nu_\infty(f)\prod_{\ell}\nu_\ell(f) = \frac{1}{\omega_K}\frac{h_{K}}{h_{K^+}}.
\end{equation*}
\end{thm}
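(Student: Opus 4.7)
The plan is to reduce Theorem~\ref{Thm:Main} to Proposition~\ref{PropClassNumberRatio}, which already expresses $h_K/h_{K^+}$ as $\omega_K\,\nu_\infty(f)\prod_\ell\nu_\ell(K)$. Given this, the theorem follows once one establishes the prime-by-prime identity $\nu_\ell(f)=\nu_\ell(K)$ for every rational prime $\ell$, and then divides through by $\omega_K$. So my entire job is a case check, organized by the splitting behavior of $\ell$ in $\O_K$ (equivalently, by the factorization shape of $f\bmod\ell$).

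First I would dispose of the prime $p$. Under the hypotheses \eqref{enord}--\eqref{enmax}, Lemma~\ref{lemprimesplitsKplustoK} forces each prime of $K^+$ above $p$ to split in $K$, so that $p\O_K$ factors either completely or as $\mathfrak{p}_1\mathfrak{p}_2$. Both possibilities are handled directly by Proposition~\ref{PropMatchGeneralCaseForP}, which yields $\nu_p(f)=\nu_p(K)$ with no further work.

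For $\ell\neq p$, the cyclicity hypothesis \eqref{encyclic} ensures, via Lemma~\ref{lemFrob} and Table~\ref{Tab:Frob}, that $f\bmod\ell$ has one of exactly seven shapes, split between the regular semisimple Table~\ref{Tab:RegSSCharPolShape} and the non-semisimple Table~\ref{Tab:NonSSCharPolShape}. For the four regular semisimple shapes, Proposition~\ref{PropMatchGeneralCaseForEll} gives the matching $\nu_\ell(f)=\nu_\ell(K)$ for all odd prime $g$, and in particular for $g=3$. For the three non-semisimple shapes $[1]^6$, $[1]_1^3[1]_2^3$, $[2]^3$, I would invoke Proposition~\ref{PropMatchSpecificCaseForEll}, which is precisely the ingredient specific to $g=3$ that is absent in the general odd prime case (this is the reason Conjecture~\ref{ConjMainThmGeneral} remains only conjectural for $g>3$). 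Combining the two propositions for $\ell\neq p$ with Proposition~\ref{PropMatchGeneralCaseForP} for $\ell=p$, we obtain $\nu_\ell(f)=\nu_\ell(K)$ for every $\ell$.

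Substituting into the conclusion of Proposition~\ref{PropClassNumberRatio} and dividing by $\omega_K$ then yields the theorem. There is no real obstacle here: the heavy lifting was already completed in Section~\ref{Sec:LocalFactorsf}--\ref{Sec:LocalFactorsK}, where the centralizer orders of the non-semisimple classes (Proposition~\ref{NonSSCent}) and the matchings with the character values (Tables~\ref{Tab:Match} and~\ref{Tab:NonSSMatch}) were verified. The only subtle point worth flagging in the write-up is that the seven shapes exhaust all possible factorization patterns of $f\bmod\ell$ under \eqref{encyclic} — this is exactly what Lemma~\ref{lemFrob} provides by enumerating the nested pairs $I(\ell)\subseteq D(\ell)\subseteq \Z/2g\Z$ — so that the three matching propositions together form an airtight partition of cases.
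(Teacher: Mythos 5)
Your proposal is correct and follows exactly the paper's argument: the paper's proof is the one-line combination of Propositions~\ref{PropMatchGeneralCaseForEll}, \ref{PropMatchGeneralCaseForP}, \ref{PropMatchSpecificCaseForEll}, and \ref{PropClassNumberRatio} for $g=3$, which is precisely the case decomposition you spell out. Your additional remark that Lemma~\ref{lemFrob} guarantees the seven shapes exhaust all factorization patterns is a worthwhile point the paper leaves implicit, but it does not change the route.
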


\begin{proof}
Combine Proposition \ref{PropMatchSpecificCaseForEll} with Propositions \ref{PropMatchGeneralCaseForEll}, \ref{PropMatchGeneralCaseForP}, and \ref{PropClassNumberRatio} for $g=3$, and the theorem is proven.
\end{proof}

While Theorem \ref{Thm:Main} and Conjecture \ref{ConjMainThmGeneral} are interesting in their own right as an unexpected equality of a product of local densities of matrices to a ratio of class numbers, there is an interpretation of this quantity related to isogeny classes of abelian varieties via results in a preprint of Everett Howe \cite{howepreprint}.

We begin with some notation.  Given a Weil polynomial $f\in\Z[T]$, let $K$, $\pi_f$, $\bar\pi_f$, $K^+$, and $\O_f$ be defined as in section \ref{Sec:WeilPolys} and let $\O_{f^+}=\Z[\pi_f+\bar\pi_f]$.  Let $U$ be the unit group of $\O_f$ and let $U^+_{>0}$ be the group of totally positive units in $\O_{f^+}$.  Denote the narrow class number of $K^+$ by  $h^+_{K^+}$.  

\begin{thm}[Howe]\label{Thm:Howe}
Let $\mathcal{I}$ be an isogeny class of simple ordinary abelian varieties over $\Fq$ corresponding to an irreducible Weil polynomial $f\in\Z[T]$.  Using the notation above, suppose that $\O_f$ is the maximal order of $K$, and suppose that $K$ is ramified over $K^+$ at a finite prime.  Then the number of abelian varieties in $\mathcal{I}$ that have a principal polarization is equal to $\frac{h_K}{h^+_{K^+}}$, and each such variety has (up to isomorphism) exactly $[U^+_{>0}:N(U)]$ principal polarizations, where $N$ is the norm map from $U$ to $U^+_{>0}$. 
\end{thm}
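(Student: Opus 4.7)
The plan is to parameterize the isogeny class $\mathcal{I}$ via Deligne's equivalence for ordinary abelian varieties, translate principal polarizations into an ideal-theoretic condition, and then count via the norm map between the class groups of $K$ and $K^+$.

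First, I would invoke Deligne's theorem: since $f$ is irreducible, $X$ is simple ordinary, and $\O_f=\Z[\pi_f,\bar\pi_f]$ is the maximal order $\O_K$, isomorphism classes of abelian varieties $A\in\mathcal{I}$ correspond bijectively to elements $[\mathfrak{a}]\in\mathrm{Cl}(K)$, giving $h_K$ varieties up to isomorphism. Under this correspondence $A=A_\mathfrak{a}$, the Howe--Deligne dictionary identifies polarizations on $A_\mathfrak{a}$ with elements $\xi\in K^\times$ satisfying $\bar\xi=-\xi$, a Frobenius-positivity condition at the CM type cut out by $\pi_f$, and a divisor condition of the shape $(\xi)\mathfrak{a}\bar{\mathfrak{a}}\mathfrak{d}_{K/\Q}=\mathfrak{c}$, where the ideal $\mathfrak{c}$ encodes the polarization degree. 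Principal polarizations are precisely those with $\mathfrak{c}=\O_K$.

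Next, fixing a Frobenius-positive totally imaginary generator $\delta$ of $\mathfrak{d}_{K/\Q}^{-1}$, write $\xi=\delta\eta$ with $\eta\in K^+$ totally positive; the principal polarization condition then becomes $(\eta)=N_{K/K^+}(\mathfrak{a})^{-1}$ in $K^+$ with $\eta\gg 0$. Hence $A_\mathfrak{a}$ admits a principal polarization if and only if $[\mathfrak{a}]\in\ker\bigl(N:\mathrm{Cl}(K)\to\mathrm{Cl}^+(K^+)\bigr)$, where $\mathrm{Cl}^+(K^+)$ is the narrow class group of order $h^+_{K^+}$. The finite-ramification hypothesis enters precisely at this step: via Hasse's norm theorem together with genus theory, ramification of $K/K^+$ at some finite prime forces $N$ to be surjective (without it, the image could have index $2$), so $|\ker N|=h_K/h^+_{K^+}$, giving the advertised count of principally polarizable varieties in $\mathcal{I}$.

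Finally, for a fixed such $A_\mathfrak{a}$, every principal polarization has the form $\delta\eta$ with $\eta\in U^+_{>0}$, and two of them yield isomorphic polarized varieties if and only if $\eta'/\eta=u\bar u=N(u)$ for some $u\in U$, since automorphisms of $A_\mathfrak{a}$ act on the polarization through $u\mapsto u\bar u$. The set of principal polarizations up to isomorphism is therefore a torsor under $U^+_{>0}/N(U)$, giving exactly $[U^+_{>0}:N(U)]$ polarizations on each such variety. The main obstacle I anticipate is the surjectivity of $N$ on class groups under the finite-ramification hypothesis, which requires careful use of Hasse's norm theorem together with signature/unit bookkeeping controlled by the Hasse unit index $Q$; pinning down the Frobenius-positivity condition in the Deligne--Howe dictionary precisely enough that the narrow class group $\mathrm{Cl}^+(K^+)$, rather than the wide one, appears in the kernel computation is the delicate technical point.
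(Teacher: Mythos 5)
First, a caveat about the comparison you asked for: the paper does not prove this statement at all --- it is imported verbatim from Howe's preprint \cite{howepreprint} and used as a black box --- so there is no in-paper argument to measure yours against. Judged on its own terms, your outline follows the right strategy (Deligne's equivalence for ordinary abelian varieties plus Howe's ideal-theoretic dictionary for polarizations, which is indeed how results of this type are established), and the last step --- counting principal polarizations on a fixed $A_{\mathfrak{a}}$ as a torsor under $U^+_{>0}/N(U)$ via the action $\xi\mapsto u\bar u\,\xi$ of units --- is correct as stated.

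There are, however, two genuine gaps, and they sit exactly at the hard points. First, you ``fix a Frobenius-positive totally imaginary generator $\delta$ of $\mathfrak{d}_{K/\Q}^{-1}$'' as if one were available; the existence of such a $\delta$ (even of one generating $\mathfrak{d}_{K/\Q}^{-1}\mathfrak{b}$ for some fractional ideal $\mathfrak{b}$ of $K^+$) is precisely Howe's polarization obstruction from \cite{howe95}, and it can fail: Howe exhibits ordinary isogeny classes containing \emph{no} principally polarized varieties even though $h_K/h^+_{K^+}$ is a positive integer. The finite-ramification hypothesis therefore has to be used twice --- once to kill this obstruction and once for surjectivity of the norm --- and your sketch only uses it once. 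Second, the surjectivity of $N\colon \mathrm{Cl}(K)\to\mathrm{Cl}^+(K^+)$ does not follow from ``Hasse's norm theorem together with genus theory'' (Hasse's theorem is a local-global principle for norms of \emph{elements} in cyclic extensions, not for ideal classes); the clean argument is class field theory: the cokernel of $N$ is $\Gal\left((H^+\cap K)/K^+\right)$, where $H^+$ is the narrow Hilbert class field of $K^+$, and since $H^+/K^+$ is unramified at every finite prime while $K/K^+$ is ramified at one, $H^+\cap K=K^+$ and $N$ is surjective. With those two points repaired --- and with the observation that the principally polarizable classes then form a single nonempty fiber of $N$ (the condition is $N([\mathfrak{a}])=[\mathfrak{b}]^{-1}$ in $\mathrm{Cl}^+(K^+)$, not $[\mathfrak{a}]\in\ker N$ on the nose), hence have cardinality $h_K/h^+_{K^+}$ by surjectivity --- your argument becomes a faithful reconstruction of Howe's proof.
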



We note that the conditions in Theorem \ref{Thm:Howe} are indeed met under our conditions; we assume that the abelian varieties are ordinary in \eqref{enord}, the polynomial $f$ is irreducible in \eqref{encyclic}, and $\O_f$ is the maximal order of $K$ in \eqref{enmax}.  By \cite[Lemma 10.2]{howe95}, $K$ is ramified over $K^+$ at a finite prime whenever $g$ is odd. 

\begin{cor}\label{CorToHowe}
Let everything be as in Theorem \ref{Thm:Howe}. If in addition the unit groups of $K$ and $K^+$ are equal, then the total number of principally polarized varieties lying in the isogeny class $\mathcal{I}$ is equal to $h_K/h_{K^+}$.
\end{cor}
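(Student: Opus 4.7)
The plan is to derive the corollary directly from Theorem~\ref{Thm:Howe} by a short counting argument. Howe's theorem asserts that there are $h_K/h^+_{K^+}$ isomorphism classes of abelian varieties in $\mathcal{I}$ admitting a principal polarization, and that each such variety carries $[U^+_{>0}:N(U)]$ non-isomorphic principal polarizations. Hence the total number of isomorphism classes of principally polarized abelian varieties in $\mathcal{I}$ is
\[
\frac{h_K}{h^+_{K^+}}\cdot [U^+_{>0}:N(U)],
\]
so the claim reduces to establishing $[U^+_{>0}:N(U)]\cdot h_{K^+}=h^+_{K^+}$.

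First I would use the hypothesis $U=U^+$ to evaluate $N(U)$. Since every element of $U$ already lies in $K^+$, complex conjugation fixes it, whence $N(u)=u\bar u=u^2$ for every $u\in U$. Therefore $N(U)=(U^+)^2$, and the index of interest simplifies to $[U^+_{>0}:(U^+)^2]$.

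Next, because $K^+$ is totally real of degree $g$, Dirichlet's unit theorem gives $U^+\cong\{\pm 1\}\times\Z^{g-1}$, and in particular $[U^+:(U^+)^2]=2^g$. Combining this with the chain of subgroups $(U^+)^2\subseteq U^+_{>0}\subseteq U^+$ yields
\[
[U^+_{>0}:(U^+)^2]=\frac{2^g}{[U^+:U^+_{>0}]}.
\]

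Finally, I would invoke the classical relation between the wide and narrow class numbers of a totally real field $K^+$ of degree $g$, namely
\[
h^+_{K^+}=\frac{2^g}{[U^+:U^+_{>0}]}\cdot h_{K^+},
\]
which is extracted from the exact sequence
\[
1\to U^+_{>0}\to U^+\to\{\pm 1\}^g\to\mathrm{Cl}^+(K^+)\to\mathrm{Cl}(K^+)\to 1
\]
induced by the sign maps at the $g$ real places of $K^+$. Substituting the two displayed indices into Howe's formula collapses the product to $h_K/h_{K^+}$, establishing the corollary. The only real subtlety is keeping the various unit and class-group indices straight; once the hypothesis $U=U^+$ reduces the norm to squaring, what remains is a standard bookkeeping calculation and presents no serious technical obstacle.
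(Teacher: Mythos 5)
Your proposal is correct and follows the same route as the paper: multiply the two quantities furnished by Theorem~\ref{Thm:Howe} and identify $[U^+_{>0}:N(U)]$ with $h^+_{K^+}/h_{K^+}$ under the hypothesis $U=U^+$. The paper simply asserts this last identity, whereas you supply the (standard and correct) justification via $N(U)=(U^+)^2$, Dirichlet's unit theorem, and the sign-map exact sequence relating the wide and narrow class groups.
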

\begin{proof}
If the unit groups of $K$ and $K^+$ are equal, then the ratio $\frac{h^+_{K^+}}{h_{K^+}}$ is equal to $[U^+_{>0}:N(U)]$ and the result follows from Theorem \ref{Thm:Howe}.
\end{proof}
\begin{rem}
Corollary \ref{CorToHowe} generalizes the classical result that the number of elliptic curves in a fixed isogeny class is given by the class number of an appropriate imaginary quadratic field.
\end{rem}

We assume in \eqref{encyclic} that $K/\Q$ is cyclic, which implies that Hasse's unit index is 1 (see the proof of Proposition~\ref{PropClassNumberRatio}).  Thus, under our conditions, the unit groups of $K$ and $K^+$ are equal, and so we can apply Corollary \ref{CorToHowe}.

Recall that $\A_g(\Fq; f)$ denotes the set of isomorphism classes of principally polarized abelian varieties of dimension $g$ over $\Fq$ with characteristic polynomial of Frobenius $f$, weighted inversely by the size of the automorphism group.  
Then we have the following corollary of Theorem~\ref{Thm:Main}.

\begin{cor}
\label{Cor:g3UseHowe}
Let $f$ be as in Theorem~\ref{Thm:Main}. Then $$\nu_{\infty}(f)\prod_{\ell}\nu_{\ell}(f) = \#\mathcal{A}_3(\F_q;f).$$
\end{cor}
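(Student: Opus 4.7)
The plan is to combine Theorem~\ref{Thm:Main} with Howe's count of principally polarized varieties (Corollary~\ref{CorToHowe}) and a uniform automorphism count. Theorem~\ref{Thm:Main} already identifies the left hand side with $\frac{1}{\omega_K}\cdot\frac{h_K}{h_{K^+}}$, so the corollary reduces to verifying
$$\#\mathcal{A}_3(\F_q; f) = \frac{h_K}{\omega_K\, h_{K^+}}.$$

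First I would check that the hypotheses of Corollary~\ref{CorToHowe} are met in our setting. Conditions \eqref{enord}, \eqref{encyclic}, and \eqref{enmax} provide a simple ordinary isogeny class $\mathcal{I}_f$ corresponding to the irreducible Weil polynomial $f$ with $\O_f = \O_K$ maximal; the ramification of $K$ over $K^+$ at a finite prime is automatic for $g = 3$ odd via \cite[Lemma~10.2]{howe95}; and cyclicity of $K/\Q$ forces Hasse's unit index to be $1$, so the unit groups of $K$ and $K^+$ agree in the sense required. Corollary~\ref{CorToHowe} then delivers the \emph{unweighted} count: the number of isomorphism classes of principally polarized abelian varieties in $\mathcal{I}_f$ equals $h_K/h_{K^+}$.

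Next I would convert this unweighted count to the automorphism-weighted count defining $\#\mathcal{A}_3(\F_q;f)$. Under \eqref{enord} and \eqref{enmax} every $X\in\mathcal{I}_f$ has $\End(X) = \O_K$, and an automorphism of a principally polarized pair $(X,\lambda)$ is a unit $u\in\O_K^\times$ with $u\bar u = 1$, the Rosati involution being complex conjugation on $K$. Since Hasse's unit index equals $1$, every unit factors as $\zeta v$ with $\zeta\in\mu(K)$ and $v\in\O_{K^+}^\times$; the norm-one condition collapses to $v^2 = 1$, and the norm-one subgroup of $\O_K^\times$ is exactly $\mu(K)$, a group of order $\omega_K$. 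Hence every principally polarized variety in the isogeny class contributes $1/\omega_K$ to the weighted count, giving $\#\mathcal{A}_3(\F_q;f) = (h_K/h_{K^+})/\omega_K$; chaining with Theorem~\ref{Thm:Main} then closes the proof.

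The main (and genuinely modest) obstacle is this uniform automorphism count: one must verify that \emph{every} principally polarized variety in the class has automorphism group of order exactly $\omega_K$, which relies on the combination of ordinariness and maximality (so that $\End(X)=\O_K$ for all $X\in\mathcal{I}_f$) together with cyclicity of $K/\Q$ (so that Hasse's unit index is $1$). Once these ingredients are assembled, the corollary is a direct concatenation of Theorem~\ref{Thm:Main} and Corollary~\ref{CorToHowe}.
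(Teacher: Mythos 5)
Your proposal is correct and follows essentially the same route as the paper: invoke Corollary~\ref{CorToHowe} for the unweighted count $h_K/h_{K^+}$, observe that every member of the isogeny class has automorphism group of order $\omega_K$, and divide. The only difference is that you justify the automorphism count (norm-one units of $\O_K$ form $\mu(K)$) where the paper simply asserts it --- a worthwhile addition, though note the conclusion $\{u\in\O_K^\times : u\bar u=1\}=\mu(K)$ already follows from Kronecker's theorem without appealing to Hasse's unit index.
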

\begin{proof}
From Corollary~\ref{CorToHowe}, the (unweighted) size of the isogeny class of principally polarized abelian threefolds with characteristic polynomial $f(T)$ is $\frac{h_{K}}{h_{K^+}}$.  The size of the automorphism group of any element of that isogeny class is $\omega_K$, so 
$$\frac{1}{\omega_K} \frac{h_K}{h_{K^+}} = \#\mathcal{A}_3(\F_q;f)$$
and the corollary is proven.
\end{proof}

If we also assume Conjecture \ref{ConjMainThmGeneral}, then we can state the following corollary, which generalizes the previous result.

\begin{cor}
Assume that Conjecture \ref{ConjMainThmGeneral} is true and let $f$ be as in that conjecture. Then $$\nu_{\infty}(f)\prod_{\ell}\nu_{\ell}(f) = \#\mathcal{A}_g(\F_q;f).$$
\end{cor}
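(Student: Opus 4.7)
The plan is to mirror the proof of Corollary~\ref{Cor:g3UseHowe} verbatim, but this time relying on Conjecture~\ref{ConjMainThmGeneral} (rather than Theorem~\ref{Thm:Main}) to supply the identification of the local product with a ratio of class numbers. In other words, once the conjecture is granted, the corollary is obtained by chaining together the conjecture with Howe's theorem, exactly as in the $g=3$ case. So the proof is essentially bookkeeping: I need to verify that all hypotheses needed to invoke Howe's theorem for general odd prime $g$ are met under our standing assumptions \eqref{enord}--\eqref{enmax}.

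First, I would apply Conjecture~\ref{ConjMainThmGeneral} to replace $\nu_\infty(f)\prod_\ell \nu_\ell(f)$ with $\frac{1}{\omega_K}\cdot\frac{h_K}{h_{K^+}}$. Next, I would check the hypotheses of Theorem~\ref{Thm:Howe}: condition~\eqref{enord} gives ordinarity, condition~\eqref{encyclic} makes $f$ irreducible (so $\mathcal{I}$ is a simple isogeny class), and condition~\eqref{enmax} states that $\O_f$ is the maximal order of $K$. Ramification of $K/K^+$ at a finite prime follows from \cite[Lemma 10.2]{howe95}, because $g$ is odd. To pass from the narrow-class-number formulation in Theorem~\ref{Thm:Howe} to the plain ratio $h_K/h_{K^+}$ via Corollary~\ref{CorToHowe}, I would invoke the equality of the unit groups of $K$ and $K^+$, which is a consequence of Hasse's unit index being $1$; this in turn follows from the cyclicity of $K/\Q$ in~\eqref{encyclic} by \cite[Theorem~3]{furuya_1977}, as already used in the proof of Proposition~\ref{PropClassNumberRatio}. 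Each of these verifications is independent of whether $g=3$ or $g$ is a general odd prime.

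Corollary~\ref{CorToHowe} then yields that the (unweighted) number of principally polarized abelian varieties in the isogeny class $\mathcal{I}_f$ is $h_K/h_{K^+}$. Since $\mathcal{A}_g(\Fq;f)$ weights each isomorphism class inversely by the size of its automorphism group, and the automorphism group of any principally polarized abelian variety in $\mathcal{I}_f$ has order $\omega_K$ (being the unit group of $\O_f=\O_K$), we obtain
\[
\#\mathcal{A}_g(\Fq;f)=\frac{1}{\omega_K}\cdot\frac{h_K}{h_{K^+}}.
\]
Combining with the identity from the conjecture in the first step finishes the proof.

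The main (and essentially only) obstacle is that none of the real analytic or representation-theoretic work is being done here; everything of substance has been packaged into Conjecture~\ref{ConjMainThmGeneral} and Howe's theorem. The one place where care is needed is ensuring that the hypotheses of Howe's theorem and of Corollary~\ref{CorToHowe} remain valid for all odd prime $g$ rather than just $g=3$. Since \eqref{enord}--\eqref{enmax}, the cyclicity of $K/\Q$, and the oddness of $g$ are all that the appeals to \cite{howe95}, \cite{furuya_1977}, and \cite{Wash97} require, no new input is needed beyond what has already been established, and the proof proceeds as outlined.
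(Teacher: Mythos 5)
Your proposal is correct and matches the paper's (implicit) argument exactly: the paper offers no separate proof for this corollary, treating it as the verbatim generalization of Corollary~\ref{Cor:g3UseHowe} with Conjecture~\ref{ConjMainThmGeneral} substituted for Theorem~\ref{Thm:Main}, which is precisely what you do. Your additional care in checking that the hypotheses of Theorem~\ref{Thm:Howe} and Corollary~\ref{CorToHowe} hold for all odd primes $g$ (ramification via \cite[Lemma 10.2]{howe95} and unit-group equality via Hasse's unit index) is consistent with, and slightly more explicit than, what the paper records.
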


Recent work of Marseglia gives an algorithm for computing isomorphism classes of abelian varieties in certain isogeny classes \cite{marseglia2018}.  One possible application of the algorithm, according to the author, is to provide computational evidence for the main formulas in \cite{achterwilliams2015}, \cite{achtergordon2017}, and the present work.

\subsection*{Acknowledgments}
We thank Everett Howe for sharing with us his work related to isogeny classes of principally polarized abelian varieties, and Jeff Achter for very helpful conversations.  We also thank the referee for useful suggestions and insightful questions.

\bibliographystyle{plain}
\bibliography{bibliography}

\begin{thebibliography}{10}

\bibitem{achterwilliams2015}
Jeffrey Achter and Cassandra Williams.
\newblock Local heuristics and an exact formula for abelian surfaces over
  finite fields.
\newblock {\em Canad. Math. Bull.}, 58(4):673--691, 2015.

\bibitem{achtergordon2017}
Jeffrey~D. Achter and Julia Gordon.
\newblock Elliptic curves, random matrices and orbital integrals.
\newblock {\em Pacific J. Math.}, 286(1):1--24, 2017.
\newblock With an appendix by S. Ali Altu\u g.

\bibitem{furuya_1977}
Hisako Furuya.
\newblock Principal ideal theorems in the genus field for absolutely abelian
  extensions.
\newblock {\em Journal of Number Theory}, 9(1):4--15, 1977.

\bibitem{gekeler03}
Ernst-Ulrich Gekeler.
\newblock Frobenius distributions of elliptic curves over finite prime fields.
\newblock {\em Int. Math. Res. Not.}, (37):1999--2018, 2003.

\bibitem{howepreprint}
Everett~W. Howe.
\newblock Variations in the distribution of principally-polarized abelian
  varieties among isogeny classes.
\newblock preprint.

\bibitem{howe95}
Everett~W. Howe.
\newblock Principally polarized ordinary abelian varieties over finite fields.
\newblock {\em Trans. Amer. Math. Soc.}, 347(7):2361--2401, 1995.

\bibitem{katz-lt}
Nicholas~M. Katz.
\newblock Lang-{T}rotter revisited.
\newblock {\em Bull. Amer. Math. Soc. (N.S.)}, 46(3):413--457, 2009.

\bibitem{marseglia2018}
S.~{Marseglia}.
\newblock {Computing square-free polarized abelian varieties over finite
  fields}.
\newblock {\em ArXiv e-prints}, May 2018.

\bibitem{rauchthesis}
Job Rauch.
\newblock {Using heuristics on local matrix groups to count Abelian surfaces}.
\newblock Master's thesis, Leiden University, August 2017.

\bibitem{shinoda80}
Ken-ichi Shinoda.
\newblock The characters of {W}eil representations associated to finite fields.
\newblock {\em J. Algebra}, 66(1):251--280, 1980.

\bibitem{tate66}
John Tate.
\newblock Endomorphisms of abelian varieties over finite fields.
\newblock {\em Invent. Math.}, 2:134--144, 1966.

\bibitem{Wash97}
Lawrence~C. Washington.
\newblock {\em Introduction to cyclotomic fields}, volume~83 of {\em Graduate
  Texts in Mathematics}.
\newblock Springer-Verlag, New York, second edition, 1997.

\bibitem{weyl}
Hermann Weyl.
\newblock {\em The classical groups}.
\newblock Princeton Landmarks in Mathematics. Princeton University Press,
  Princeton, NJ, 1997.
\newblock Their invariants and representations, Fifteenth printing, Princeton
  Paperbacks.

\end{thebibliography}

\end{document}